\title{ \huge {Inverse Limits and Topologies of Infinite Graphs} \thanks
{{\it Key Words}:  Compact spaces, Infinite graphs, Inverse limits.}
\thanks {2010{ \it Mathematics Subject Classification}:  05C10, 05C63, 57M15, 57M99.
 }}
\author{Babak Miraftab\thanks{babak.miraftab@uni-hamburg.de}}
\affil{Fachbereich Mathematik, Universit$\rm{\ddot{a}}$t Hamburg, Bundesstra{\rm{\ss}}e~$55$,~$20146$ Hamburg, Germany}
\tikzset{>=stealth}
\newcommand{\bbN}{\ensuremath{\mathbb{N}}}
\newcommand{\bbR}{\ensuremath{\mathbb{R}}}
\theoremstyle{definition}
\theoremstyle{plain}
\newtheorem{thm}{Theorem}
\newtheorem{lem}[thm]{Lemma}
\newtheorem{coro}[thm]{Corollary}
\newtheorem{remark}[thm]{Remark}
\newtheorem{deff}[thm]{Definition}
\theoremstyle{remark}
\newenvironment{txteq*}
  {
    \begin{equation*}
    \begin{minipage}[t]{0.85\textwidth} 
    \em                                
  }
  {\end{minipage}\end{equation*}\ignorespacesafterend}
\begin{document}
\maketitle
\begin{abstract}
Two of the natural topologies for infinite graphs with edge-ends are {\sc{Etop}} and {\sc{Itop}}. In this paper, we study and characterize them. We show that {\sc{Itop}} can be constructed by inverse limits of inverse systems of graphs with finitely many vertices. Furthermore, as an application of the inverse limit approach, we construct a topological spanning tree in {\sc{Itop}}.
\end{abstract}

\section{Introduction}
Studying graphs as topological spaces has a vast number benefits, see \cite{Georgakopoulos1,sur,mir,Polat,stein}.
This view allows us to compactify graphs.
For instance, considering infinite graphs as compact spaces enables us to define infinite cycles, see \cite{Georgakopoulos1}.
Compactifying infinite graphs is one of the controversial problems in infinite graph theory, see \cite{sur}. 

In 1931, Freudenthal \cite{Freu} introduced ends of  locally compact, connected, locally connected, $\sigma$-compact, Hausdorff  topological spaces~$X$ as points at infinity for compactification purposes. 
Essentially, Freudenthal’s ends are defined as descending sequences~$U_1\supseteq U_2 \supseteq \cdots$ of connected open sets with compact boundaries in a such way that~$\bigcap \overline{U_i}=\emptyset$.
Adding these ends with new appropriate open sets around them to $X$ leads to a new space which is compact.
This new compact space is called the \emph{Freudenthal compactification} of $X$.
In 1963, Halin  \cite{halin}, introduced graph-theoretical vertex-ends as equivalence classes of rays independently.
Those ends are, in general, distinct from Freudenthal’s ends.
In 2004, Diestel and K$\rm{\ddot{u}}$hn \cite{diestelkuhn} showed that these two kind of ends coincide for locally finite graphs.
More precisely, let~$G$ be a locally finite graph. Then the geometric realization of~$G$ is one-dimensional complex and we compactify~$G$ with the Freudenthal compactification and so some topological ends are obtained. 
Topological ends of~$G$ correspond to vertex-ends introduced by Halin.
It turns out that the Freudenthal compactification for locally finite graphs and the definition of infinite cycles as homeomorphic images of the unit circle~$S^1$ in the Freudenthal compactification of a graph are good approaches to extend extremal finite graph theory for infinite graphs, see \cite{sur}.

The most commonly used topologies on infinite graphs are {\sc Top},  {\sc{Etop}}, {\sc Itop},~$\ell$-{\sc Top}, {\sc Mtop}, {\sc Vtop}, for comprehensive details see~\cite{Georgakopoulos,diestel1}. 
It is worth mentioning that for a locally finite graph all those topologies coincide. 
So the importance of studying of them is when a given graph has a vertex of infinite degree.  
The topology {\sc{Etop}} was first defined by Diestel \cite{diestelper} though it appeared first in \cite{Schulz} by Schulz.  
Among of all non-trivial topologies for infinite graphs {\sc{Etop}} is the coarest. 
The ends considered in {\sc{Etop}} are edge-ends rather then the usual vertex ends. 
Note that {\sc{Etop}} is always a compact space, see \cite[Satz 2.1]{Schulz}.
The topology {\sc{Etop}} is not always Hausdorff. By identifying any two points that have the same open neighborhoods
and use the quotient topology  on {\sc{Etop}}, the  topological space {\sc{Itop}} is obtained.\footnote{Other names in the literature for {\sc{Etop}} and {\sc{Itop}} are {\sc{Etop}}$^{\prime}$ and {\sc{Etop}}, respectively, see  \cite{sur}.}
In this paper, we reconstruct the topologies {\sc{Etop}} and {\sc{Itop}} with different methods and show that all of them are  homeomorhic.
First,  we introduce a new topology for infinite graphs, namely {\sc{FCtop}}, with respect to edge-ends, which turns out to be equivalent to the topology {\sc{Etop}}. Then we define two families of inverse systems whose inverse limits are homeomorphic to {\sc{Itop}}.
Furthermore, as an application of our approach, we will construct  topological spanning tree for an infinite graphs with {\sc{Itop}} in Section 5.
 
\section{Preliminaries}
We refer readers to Diestel \cite{diestel} and Munkres \cite{munkres} for the standard terminologies and notations of graph theory and topology, respectively.
\subsection{Graphs}

Throughout this paper, graphs are infinite and connected and~$G$ will be reserved for graphs with the vertex set~$V(G)$ and the edge set~$E(G)$. 
A~$1$-way infinite path is called a \textit{ray}, a~$2$-way infinite path is a \textit{double ray}, and the subrays of a ray or double ray are its \textit{tails}. 
The union of a ray with infinitely many disjoint finite paths having precisely their first vertex is a \textit{comb} and the last vertices of these paths are \textit{teeth}. 
Two rays in a graph~$G$ are \textit{edge equivalent} if for any finite set~$F$ of edges,~$R_1$ and~$R_2$ have a tail in the same component of~$G$ without inner points of edges of~$F$. 
The corresponding edge equivalence classes of rays are the \textit{edge-ends} of~$G$ and for a ray $R$ and we show the corresponding edge-end by $[R]$.
We denote the set of all edge-end of~$G$ by~$\Omega'(G)$. 
It is important to notice that by replacing edge by vertex in the definition of edge-end, we obtain the \textit{vertex-end}, however in this paper  we are only concerned with edge-ends.
For distinguishing between vertex and edge ends see \cite{Hahn}.
For instance, let~$G$ be a graph as depicted on Figure 2.1.
Then~$G$ has exactly one edge-end and the vertex~$v$ dominates it. 
Note that we are not able to separate them by a finite cut.\\
\begin{center}
\begin{tikzpicture}
\draw[<->,>=latex'] (-0.5,1)--(9.5,1);
\draw[<->,>=latex'] (-0.5,5)--(9.5,5);
\draw (4.5,3) node[circle,fill, inner sep=2pt](v){}node[above=1pt]{$v$};
\foreach \i in {1,2,...,8}{
\draw (\i,1) node [circle,fill, inner sep=2pt] {};
}
\foreach \i in {1,2,...,8}{
\draw (\i,5) node [circle,fill, inner sep=2pt] {};
}
\foreach \i/\j in {0/1.2,1/1,2/1,3/1,4/1}{
\draw (v) edge[out=180,in=90] (\i,\j);
}
\foreach\i/\j in {5/1,6/1,7/1,8/1,9/1.3}{
\draw (v) edge[out=0,in=90] (\i,\j);
}
\foreach \i/\j in {0/4.8,1/5,2/5,3/5,4/5}{
\draw (v) edge[out=180,in=-90] (\i,\j);
}
\foreach\i/\j in {5/5,6/5,7/5,8/5,9/4.8}{
\draw (v) edge[out=0,in=-90] (\i,\j);
}
\draw (0.7,2) node(){$\mathbf\cdots$};
\draw (8.1,2) node(){$\mathbf\cdots$};
\draw (0.7,4) node(){$\mathbf\cdots$};
\draw (8.1,4) node(){$\mathbf\cdots$};
\begin{scope}[shift={(0.5,0.5)}]

\end{scope}
\end{tikzpicture}          
\end{center}
\begin{center}
  Figure 2.1 a graph with only one edge-end
\end{center} 
For a given subset $A$ of vertices of $G$, we denote the induced subgraph with vertices of $A$ by $G[A]$.
Suppose that a pair~$(A,B)$ is a partition of the vertices of a graph into two disjoint subsets such that the number of edges between two sides is finite.
The set of these edges between $A$ and $B$ is called A \textit{finite cut}.
So we represent every finite cut~$C$ by a pair~$(A,B)$ where $A$ and $B$ are subsets of $V(G)$ such that $A\cap B=\emptyset$ and $V(G)=A\cup B$.
Thus with the above notation~$C$ is the set of edges which joins~$G[A]$ to~$G[B]$.
We note that the set of all finite cuts with empty forms a vector space over $\mathbb Z_2$.
We denote  \textit{finite cut space} by~$\mathcal{B}_{\rm{fin}}(G)$.
Let~$R$ be a ray. Then we say that a vertex~$v$ \textit{dominates}~$R$ if for any finite set~$F$ of the set of edges, there is~$v-R'$ path in~$G$ without inner points of edges of~$F$ where~$R'$ is a tail of~$R$. So a vertex dominates an end if it dominates the corresponding ray of this end. 
An edge-end~$\omega$ \textit{lives} in a component~$C$ of~$G$  if~$V[C]$ contains one ray belonging to~$C$ or equivalently each ray.  
Let~$F$ be a subset of~$E(G)$. Then by~$\mathring{F}$, we mean all inner points of edges of~$F$. 

\subsection{Topology}  
By a basic closed set, we mean the complement of a basic open set in a topological space.
For a set~$X$, we denote the power set of~$X$ by~$\mathcal{P}(X)$.
Let~$X$ be a space that is the union of the subspaces~$X_{\alpha}$, for~$\alpha \in I$.
The topology of~$X$ is said to be \textit{coherent} with the subspaces~$X_{\alpha}$ provided a subset~$C$ of~$X$ is closed in~$X$ if~$C\cap X_{\alpha}$ is closed in~$X_{\alpha}$ for each~$\alpha\in I$.
An equivalent condition is that a set be open in~$X$ if its intersection with each~$X_{\alpha}$ is open in~$X_{\alpha}$.
Now we move to topologies of graphs.
First the geometric realization of graphs is the one dimensional complex.\footnote{For the definition of the geometric realization, see \cite{munkres2}.}
We denote the geometric realization without considering its topology of~$G$ by~$\Vert G\Vert$.
So we are able to regard inner points of edges  of a graph~$G$ as points of~$\Vert G\Vert$. 
For defining {\sc{Etop}} on~$\Vert G\Vert\cup \Omega'(G)$, we describe open sets. 
For each~$e\in E(G)$,~$\mathring e$ inherits the topology of open interval~$(0,1)$.
For any finite set~$F$ of edges of~$G$, we remove a finite set~$X$ of inner points of edges of~$F$.
Suppose that~$C$ is a component of~$\Vert G\Vert\setminus X$.
Then~$C\cup\{\omega\in\Omega'(G)\mid \omega\text{ lives on }C\}\cup L$ forms a open set, where~$L$ is the set of all partial edges like~$[a,b)$  and~$b$ is the inner point which picked up from the edge~$ac\in F$ with
$a$ lying in~$C$. 
With a similar method, we can define when~$c$ lies in~$C$ with replacing~$(b,c]$ with~$[a,b)$.
We denote the open set around an end~$\omega$ with respect to a finite set~$X$ of~$E(G)$, by~$\mathcal{O}_X(\omega)$.
The topology generated by these open sets is called {\sc Etop}.
For a locally finite graph~$G$, {\sc Etop} and  the Freudenthal compactification \cite{Freu} of the 1-complex of the graph~$G$ are the same, see \cite{diestelkuhn}.\\
\noindent It is worth noting that~$(\Vert G\Vert\cup\Omega'(G)$, {\sc Etop})  is not Hausdorff.
The solution for obtaining a Hausdorff space is identifying any two points that have the same open neighborhoods.
In other words, we define an equivalence relation between points i.e. for two points~$x,y\in\Vert G\Vert\cup\Omega'(G)$, we define~$x\sim y$ if and only if we cannot separate~$x$ and~$y$ with a finite subset of edges.
For instance, every dominating vertex is equivalent with the corresponding edge ends, see Figure 2.1.
Then we use the quotient topology and obtained a new topological space 
$\widetilde{G}$. We denote this topology by {\sc Itop}. 
Strictly speaking, {\sc Itop} is not a topology for an infinite graph, as we are identifying some points.\\

\noindent For defining our topologies, we need inverse systems and inverse limits.
Since these terminologies are one of central notations of this paper, let us review here.
Let~${(I,\preceq)}$ denote a directed poset, that is, a set with a binary relation~$\preceq$ satisfying reflexivity, antisymmetry, transitivity and moreover if~$i,j\in I$ there exists some~$k\in I$ such that~$i,j\preceq k$. 
An \textit{inverse system} of topological spaces over~$I$ consists of a collection~$\{X_i\mid i\in I\}$ of topological spaces indexed by~$I$ and a collection of continuous maps~$f_{ij}\colon X_i\to X_j$ defined whenever~$i\preceq j$ such that the diagrams of the form
\begin{center}
\begin{tikzcd}[column sep=small]
 X_i \arrow{rr}{f_{ij}} \arrow[swap]{dr}{f_{ik}}& & X_j \arrow{dl}{f_{jk}}\\
& X_k & 
\end{tikzcd}
\end{center}
\begin{center}
Figure 2.2
\end{center}
commute whenever they are defined, i.e., whenever~$i, j, k \in I$ and~$i\preceq j\preceq k$.
In addition we assume that~$f_{ii}$ is the identity mapping~$id_{X_i}$ on~$X_i$. We denote this inverse system over~$I$ by~$(X_i,f_{ij},I)$.\\
Now, assume that~$Y$ be a topological space and~$g_i\colon Y\to X_i$ is a continuous map for each~$i\in I$.
The maps~$g_i$s are called compatible if~$f_{ij}\circ g_i =g_j$ for every~$i,j\in I$.
A topological space~$X$  with compatible continuous map~$f_i\colon X\to X_i$ for~$i\in I$
is called an \textit{inverse limit} of the inverse system~$(X_i ,f_{ij} , I)$, if
there is a unique continuous map~$f\colon Y\to X$  satisfying~$f_i\circ f=g_i$.

\noindent For comprehensive detail about the inverse limit of topological spaces, see \cite{profinite group}. The following lemma plays a vital role in this paper. In fact this is an immediate corollary of \cite[Lemma 1.1.2]{profinite group}.
\begin{lem}\label{compact}
If~$(X_i,f_{ij},I)$ is an inverse system of compact Hausdorff topological
spaces, then~$\underleftarrow{\lim}\,X_i$ is compact. 
\end{lem}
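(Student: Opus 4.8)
The plan is to realise the inverse limit concretely as a subspace of a product and then invoke Tychonoff's theorem together with the Hausdorff hypothesis. Since the inverse limit is characterised by a universal property, it is unique up to homeomorphism whenever it exists; hence it suffices to exhibit one model of $\underleftarrow{\lim}\,X_i$ and to show that this particular model is compact. The natural candidate is the set of threads
\[
L \;=\; \Big\{(x_i)_{i\in I}\in\textstyle\prod_{i\in I}X_i \;:\; f_{ij}(x_i)=x_j \text{ whenever } i\preceq j\Big\},
\]
equipped with the subspace topology inherited from the product $\prod_{i\in I}X_i$, and with the maps $f_i\colon L\to X_i$ taken to be the restrictions of the coordinate projections. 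A routine verification shows that the $f_i$ are compatible (that is, $f_{ij}\circ f_i=f_j$ on $L$), and that for any space $Y$ with compatible maps $g_i\colon Y\to X_i$ the assignment $y\mapsto(g_i(y))_{i\in I}$ is the unique continuous map $f\colon Y\to L$ with $f_i\circ f=g_i$; so $L$ is indeed an inverse limit of $(X_i,f_{ij},I)$.

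It then remains to show that $L$ is compact. By Tychonoff's theorem the product $\prod_{i\in I}X_i$ is compact, since each $X_i$ is compact; so it is enough to prove that $L$ is closed in this product. Fix a pair $i\preceq j$ and consider the map $h_{ij}\colon\prod_{k\in I}X_k\to X_j\times X_j$ sending $(x_k)_k$ to $(f_{ij}(x_i),x_j)$. This map is continuous, being assembled from the projections $\pi_i,\pi_j$ and the continuous map $f_{ij}$. Because $X_j$ is Hausdorff, the diagonal $\Delta_j=\{(a,a):a\in X_j\}$ is closed in $X_j\times X_j$, so $L_{ij}:=h_{ij}^{-1}(\Delta_j)=\{(x_k)_k:f_{ij}(x_i)=x_j\}$ is closed in the product. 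Hence $L=\bigcap_{i\preceq j}L_{ij}$ is an intersection of closed sets, so it is closed. A closed subspace of a compact space is compact, and therefore $\underleftarrow{\lim}\,X_i$ is compact.

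The only places where the hypotheses are genuinely used are Tychonoff (compactness of the factors) and the Hausdorff property (closedness of the diagonal, hence of each $L_{ij}$); without the Hausdorff assumption the set of threads need not be closed in the product and this argument breaks down. That is the step I would flag as the conceptual crux, although in this generality it is entirely standard, and the whole statement can be cited as an immediate consequence of \cite[Lemma 1.1.2]{profinite group}, as the excerpt indicates; the remaining work — checking the universal property for $L$ and the bookkeeping with projections — is routine.
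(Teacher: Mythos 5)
Your proof is correct and is exactly the standard argument that the paper delegates to the cited reference \cite[Lemma 1.1.2]{profinite group}: realize the inverse limit as the set of threads in $\prod_{i\in I}X_i$, apply Tychonoff, and use the Hausdorff property of the factors to see that each constraint set $\{(x_k)_k : f_{ij}(x_i)=x_j\}$ is closed. Since the paper offers no proof of its own, there is nothing to compare beyond noting that you correctly follow the paper's (slightly unusual) convention that $f_{ij}\colon X_i\to X_j$ is defined for $i\preceq j$.
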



\section{New Topologies}
In this section, we define a new topology for infinite graphs with edge-ends. 
To define this topology,
we use finite cuts and instead of defining basic open sets for each point of~$\Vert G\Vert\cup \Omega^{\prime}(G)$, we introduce basic closed set for them. Then we introduce two new topological spaces. 
In order to introduce these new topological spaces, we define two families of auxiliary graphs with two different methods and we show that they constitute inverse systems.
\noindent We start with the definition of a new topology for infinite graphs.\\

\noindent First for any edge~$e$ of~$G$,~$\mathring{e}$ is endowed by the open interval~$(0,1)$. 
For any finite cut~$C=(A,B)$ of~$G$, we remove~$\mathring{C}$ of~$G$. 
We now define every component of~$ G\setminus \mathring C$ as basic closed set with respect to~$C$.
We need to define a basic closed set for a given end $\omega$. 
A basic closed around an end~$\omega$ is~$\mathcal{C}_C(\omega)=F\cup\{\omega\in\Omega'(G)\mid \omega\text{ lives on }C\}$ where~$F$ is the unique component which~$\omega$ lives in it.
We call the above topology~{\sc{FCtop}}.  
It is worth mentioning that after removing~$\mathring{C}$, we will have a finite number of components.\\ 
Recall that for defining {\sc{Itop}}, we identified any two points that have the same open neighborhoods in {\sc{Etop}}. 
Equivalently, we used an equivalence relation between vertices so that for two vertices we have~$x\sim y$ if and only if we cannot separate~$x$ and~$y$ with a finite cut. 
Also if we have an end which is dominated by a vertex, see Figure~$2.1$, then we identify them. 
Now let us get back to our definition. 
We need to get a Hausdorff space, but there might be some vertices which do not have any separation by finite cuts and the same problem like {\sc{Etop}} for dominating  vertices by some edge ends.
We identify these points by defining an equivalence relation on~$\Vert G\Vert\cup \Omega^{\prime}(G)$. 
Now we use the quotient topology on this quotient space. 
We denote this new space obtained by taking quotient of~$\Vert G\Vert\cup \Omega'(G)$ by the equivalence relation and the quotient topology on it with~$\widetilde{G}$ and {\sc{IFCtop}}, respectively.\\

\noindent To show that~{\sc{FCtop}} is compact, we need the following famous lemma namely star-comb lemma.
\begin{lem}{\rm\cite[Lemma 8.2.2]{diestel}}\label{star-comb}
Let~$U$ be an infinite set of vertices of a connected graph~$G$. Then~$G$ contains either a comb with all teeth in~$U$ or a subdivision of an infinite star with all leaves in~$U$. 
\end{lem}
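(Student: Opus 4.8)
The statement to be shown is the well-known star--comb lemma, and I would reprove it by the standard route: reduce to trees, then run a rooted ``heavy/light'' analysis. First I would pass to a spanning tree. Since $G$ is connected it has a spanning tree $T$ (Zorn's lemma), and $U\subseteq V(G)=V(T)$; any comb with teeth in $U$, or any subdivision of an infinite star with leaves in $U$, that lies inside $T$ also lies in $G$, so it suffices to prove the lemma with $T$ in place of $G$. Fix a root $r\in V(T)$, and for $v\in V(T)$ let $T_v$ be the cone below $v$, i.e.\ the subtree induced by all $w$ whose $r$--$w$ path passes through $v$. Call $v$ \emph{heavy} if $V(T_v)\cap U$ is infinite and \emph{light} otherwise; note $r$ is heavy since $T_r=T$.

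Next I would try to descend through heavy vertices: set $v_0=r$ and, as long as the current heavy vertex $v_i$ has a heavy child, let $v_{i+1}$ be one such child. Either this runs forever and produces a ray $R=v_0v_1v_2\cdots$ of heavy vertices, or it halts at a heavy vertex $v$ all of whose children are light. In the second case one is essentially done: $V(T_v)\cap U$ is infinite but equals $(\{v\}\cap U)\cup\bigcup_c\bigl(V(T_c)\cap U\bigr)$ over the children $c$ of $v$, and each $V(T_c)\cap U$ is finite, so infinitely many children $c$ satisfy $V(T_c)\cap U\neq\emptyset$; choosing some $u_c\in V(T_c)\cap U$ for each and taking the $v$--$u_c$ path in $T$ yields infinitely many paths that are pairwise disjoint except for the common endpoint $v$, that is, a subdivision of an infinite star with centre $v$ and all leaves in $U$.

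In the ray case I would build a comb with spine $R$. For each index $i$ the set $V(T_{v_i})\cap U$ is infinite while $V(T_{v_{i+1}})\cap U$ misses it by at most the single vertex $v_i$, and $V(T_{v_i})\setminus V(T_{v_{i+1}})=\{v_i\}\cup\bigcup_{c\neq v_{i+1}}V(T_c)$. The key point is that this difference meets $U$ for \emph{infinitely many} $i$: if it missed $U$ for all $i\geq N$, then $V(T_{v_N})\cap U$ would equal the infinite set $V(T_{v_i})\cap U$ for every $i\geq N$, so any $u$ in it would have all the vertices $v_i$ $(i\geq N)$ on its finite $r$--$u$ path, which is absurd. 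For each of these infinitely many $i$ pick $u_i\in U$ in that difference and attach to $v_i$ either the trivial path (if $u_i=v_i$) or the $v_i$--$u_i$ path running through the appropriate child $c\neq v_{i+1}$; these paths meet $R$ only in their first vertex, are pairwise disjoint because distinct $v_i$ route them into disjoint cones, and the $u_i$ are distinct since $u_j\in V(T_{v_{i+1}})$ whenever $j>i$. The union of $R$ with these paths is the desired comb.

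The only place that needs genuine care is the ray case, specifically the observation that fresh $U$-teeth must keep appearing below the spine --- which is exactly the ``finite $r$--$u$ path'' argument above; the spanning-tree reduction and the star case are routine, and of course in the paper this lemma is used only as a black box, the full proof being available in \cite[Lemma 8.2.2]{diestel}.
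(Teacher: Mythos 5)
Your proof is correct, but there is nothing in the paper to compare it against: the author states the star--comb lemma purely as a quoted black box from \cite[Lemma~8.2.2]{diestel} and gives no proof, so you have supplied an argument where the paper supplies none. Your route (pass to a spanning tree, root it, classify cones as heavy or light, descend through heavy children, and split into the ``heavy vertex with only light children'' star case versus the ``infinite heavy descent'' comb case) is a complete and standard proof; the two delicate points --- that a terminal heavy vertex must have infinitely many children whose cones meet $U$, and that in the ray case fresh elements of $U$ must keep appearing in $V(T_{v_i})\setminus V(T_{v_{i+1}})$ because an $r$--$u$ path is finite --- are both handled correctly, and the disjointness of the attached paths follows as you say from the nesting of the cones. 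One sentence is garbled: ``$V(T_{v_{i+1}})$ misses it by at most the single vertex $v_i$'' is not true as written (the difference of the cones is $\{v_i\}$ together with all the sibling cones), but you immediately write the correct decomposition and the argument never uses the false reading, so this is only a wording slip. For reference, the proof in Diestel's book takes a slightly different tack --- it grows an increasing sequence of finite trees with leaves in $U$ and then applies a K\H{o}nig-type dichotomy (a vertex of infinite degree gives the star, local finiteness gives a ray and hence the comb); your cone-based argument is equivalent in substance and arguably more self-contained, since it avoids the auxiliary tree-growing step.
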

\begin{thm}\label{fctopcompact}
If~$G$ is a countable graph. Then~$(G,{\textit{\sc{FCtop}}})$ is a compact space.
\end{thm}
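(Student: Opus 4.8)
The plan is to prove compactness directly from the definition, by showing that every open cover of $X:=\Vert G\Vert\cup\Omega'(G)$ has a finite subcover; note that Lemma~\ref{compact} is not directly applicable, since $(G,\textsc{FCtop})$ is not Hausdorff. The combinatorial engine will be the star--comb lemma (Lemma~\ref{star-comb}). Fix an open cover $\mathcal{U}$, call a closed subset of $X$ \emph{large} if no finite subfamily of $\mathcal{U}$ covers it, and assume for contradiction that $X$ itself is large.

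The first step is a propagation lemma. For a finite cut $C$ the space $G\setminus\mathring C$ has only finitely many components $F_1,\dots,F_k$, as remarked in the text, and $X=\bigcup_{e\in C}\overline e\cup\bigcup_i\overline{F_i}$, where $\overline{F_i}$ denotes $F_i$ together with the edge-ends living in it and each $\overline e$ is a compact closed edge; hence if $X$ is large then some $\overline{F_i}$ is large, and applying this inside a large component shows that a large component closure always contains a smaller one cut out by a finer finite cut. A large component is necessarily infinite, since a finite one has compact (finite-complex) closure. Using that $G$ is countable, so that there are only countably many finite cuts, I would iterate this while running through an enumeration of all finite cuts, obtaining a nested sequence $\overline{F_1}\supseteq\overline{F_2}\supseteq\cdots$ of large component closures, with $F_n$ a component of $G\setminus\mathring{C_n}$, such that $(C_n)$ is cofinal: every finite cut is, after restriction to the relevant component, refined by some $C_n$.

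Now choose distinct vertices $v_n\in F_n$ and apply Lemma~\ref{star-comb} to $\{v_n:n\in\mathbb N\}$. If it yields a comb with spine $R$, then $\omega:=[R]$ lives in $F_n$ for every $n$: fixing $n$, only finitely many edges of the comb lie in the finite cut $C_n$, so a suitable tail of $R$ together with all but finitely many of the finite paths of the comb forms a connected subgraph of $G\setminus\mathring{C_n}$ that meets $F_m\subseteq F_n$ at $v_m$ for all large $m$, and hence lies in $F_n$; in particular a tail of $R$ lies in $G[F_n]$. If instead it yields a subdivision of an infinite star, the same finiteness argument applied to its centre $c$ (joined to infinitely many $v_m$ by internally disjoint paths) gives $c\in F_n$ for every $n$. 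Either way $\bigcap_n\overline{F_n}$ contains a point $p$ which is an edge-end or a vertex. Now $p$ lies in some $U\in\mathcal{U}$, and unwinding the definition of $\textsc{FCtop}$ gives a basic open set $p\in O\subseteq U$ of the form $O=X\setminus(\overline{G_1}\cup\cdots\cup\overline{G_r}\cup K)$, where each $\overline{G_j}$ is the closure of a component of some finite cut with $p\notin\overline{G_j}$ and $K$ is a finite union of relatively closed subsets of edge interiors. By cofinality, for all large $n$ the cut $C_n$ refines the cuts underlying the $\overline{G_j}$, which forces $\overline{F_n}\cap\overline{G_j}=\emptyset$ for all $j$; thus $\overline{F_n}\setminus O\subseteq K$ is a closed subset of $X$ contained in a finite union of closed edges, hence compact, hence covered by finitely many members of $\mathcal{U}$. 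Together with $U$ this covers $\overline{F_n}$ by finitely many members of $\mathcal{U}$, contradicting that $\overline{F_n}$ is large.

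I expect the main obstacle to lie in organising the second step and the bookkeeping in the last: finite cuts are not closed under union, so the nesting $\overline{F_1}\supseteq\overline{F_2}\supseteq\cdots$ and the cofinality of $(C_n)$ must be managed through the refinement preorder on components rather than by containment of edge sets, and one has to identify exactly which subbasic closed sets form a neighbourhood base at an edge-end and at a (possibly dominating) vertex of $\textsc{FCtop}$. The conceptual heart, however, is the star--comb application, and its comb case — pinning down how the spine ray sits relative to each large component — is the more delicate of the two.
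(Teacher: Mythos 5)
Your argument is correct, and its combinatorial heart coincides with the paper's: apply the star--comb lemma (Lemma~\ref{star-comb}) to an infinite vertex set witnessing the failure of compactness, and in the comb case show that the spine's edge-end, in the star case that the centre vertex, survives into every member of a nested family of basic closed sets, because a finite cut cannot be crossed by infinitely many disjoint paths. The surrounding framework, however, is genuinely different. The paper verifies the finite intersection property for families of \emph{basic} closed sets, choosing points $x_i\in C_0\cap\cdots\cap C_i$ and feeding them to Lemma~\ref{star-comb}; you instead run a direct open-cover argument, decomposing $X$ along a finite cut into finitely many component closures plus compact closed edges, propagating ``largeness'' down a cofinal enumeration of all finite cuts (here countability of $G$ enters for you exactly where it enters for the paper), and deriving the contradiction from a basic neighbourhood of the limiting point $p$. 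What your route buys is rigour at two points the paper glosses over: reducing compactness to the FIP of \emph{subbasic} closed sets really needs the Alexander subbase lemma (or your explicit cover bookkeeping), and the paper applies the star--comb lemma to the points $x_i$, which need not be vertices, whereas your $v_n$ are vertices by construction. What the paper's route buys is brevity: no nested-sequence construction, no analysis of what a basic open neighbourhood of a vertex or end looks like. The bookkeeping issues you flag (cuts not closed under union, hence working with the refinement preorder and tolerating finitely many extra closed sub-edges at each stage; identifying the neighbourhood base at $p$) are real but routine, and your outline already contains the observations needed to resolve them.
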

\begin{proof}
In order to show the compactness of~$(G,{\textit{\sc{FCtop}}})$, we take any collection of basic closed sets~$\{C_{i}\}_{i\in \mathbb N\cup\{0\}}$ with the finite intersection property and then we show that the intersection of this collection is not empty. 
We note that since $G$ is countable, there are countably many basic closed sets. 
Let~${x_{0}\in C_{0}}$.
Then we can find a point~$x_{i}\in C_{0}\cap\cdots\cap C_{i}$. 
Let~$U$ be the collection of all~$x_{i}$'s with the above property.
It follows from Lemma \ref{star-comb} that~$G$ contains either a comb with all teeth in~$U$ or a subdivision of an infinite star with all leaves in~$U$. 
First suppose that we have a ray~$R$ with all teeth in~$U$.
We claim that the end~$[R]$ is included~$\bigcap C_{i}$.
If every~$C_i$ contains a tail of a ray in~$[R]$, then we are done.
So assume to contrary that~$C_{k}$ has no tail of a ray in~$[R]$.
Then there are infinitely many vertices of~$R$ outside of~$C_{k}$.
We denote them by~$Y$. It follows from the choice of~$x_{i}$ that there is an infinite subset~$\Lambda$ of~$\mathbb N\cup \{0\}$ such that~$x_{i}\in C_{k}$ for any~$i\in \Lambda$.
Let~$X:=\{x_{i}\}_{i\in\Lambda}$.
It is not hard to see that there are infinitely many disjoint~$X$-$Y$  paths.
On the other hand,~$C_{i}$ is a basic closed set which is separated by finitely many edges.
It yields a contradiction with infinitely many disjoint~$X$-$Y$  paths from the outside of~$C_{k}$ to the inside of~$C_{k}$. So the claim is proved.

\noindent Now suppose that~$G$ contains a subdivision of an infinite star with all leaves in~$U$. 
Let~$v$ be the center of this infinite star. 
We show that~$v$ belongs to~$\bigcap C_{i}$. 
Again there is~$C_{k}$ such that~it does not contain~$v$. 
There are infinitely many~${i\in \mathbb N\cup \{0\}}$ such that~$x_{i}\in C_{k}$. 
Hence~$v$ has infinitely many leaves in~$C_{k}$ and it contradicts with being basic closed of~$C_{k}$. 
Thus~$\bigcap C_{i}$ is not empty and we deduce that~$(G,{\textit{\sc{FCtop}}})$ is compact, as desired. 
\end{proof}
\noindent A graph~$G$ is said \textit{finitely separable} if  every two vertices can be separated by some finite set of edges.
\begin{coro}
Let~$G$ be a finitely separable~$2$-connected. Then~$(G,{\textit{\sc{FCtop}}})$ is  compact.
\end{coro}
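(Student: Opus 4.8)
The plan is to deduce this from Theorem~\ref{fctopcompact}: I will argue that a finitely separable $2$-connected graph $G$ must be \emph{countable}, and then compactness of $(G,{\textit{\sc{FCtop}}})$ is immediate. So suppose for contradiction that $G$ is uncountable. The strategy is to exhibit two vertices joined by uncountably many pairwise internally disjoint paths; since any finite set of edges meets only finitely many of these (internally disjoint $v$-$w$ paths are pairwise edge-disjoint once we discard the single edge $vw$, if present), no finite cut could separate those two vertices, contradicting finite separability.

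First I would record the elementary fact that every connected uncountable graph has a vertex of uncountable degree. Indeed, fixing a root $r$ and writing $L_n$ for the set of vertices at distance $n$ from $r$, connectedness gives $V(G)=\bigcup_{n\ge 0}L_n$, so some $L_n$ is uncountable; taking $n$ least, $L_{n-1}$ is countable while every vertex of $L_n$ has a neighbour in $L_{n-1}$, and since $\aleph_0\cdot\aleph_0=\aleph_0$ some vertex of $L_{n-1}$ then has uncountably many neighbours in $L_n$. Applying this to $G$ yields a vertex $w$ with $N\defi N_G(w)$ uncountable.

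Next I would pass to $G-w$, which is connected because $G$ is $2$-connected, fix a spanning tree $T$ of $G-w$, and let $T'$ be the smallest subtree of $T$ containing $N$ (equivalently, the union of all $a$-$b$ paths of $T$ with $a,b\in N$). Then $T'$ is an uncountable tree, so by the fact above it has a vertex $v$ of uncountable degree; since $T'$ is a tree, removing $v$ splits it into uncountably many components. The key local observation is that each such component $K$ meets $N$: given $z\in V(K)$, the vertex $z$ lies on some $a$-$b$ path of $T$ with $a,b\in N$, and since $K$ is joined to the rest of $T'$ by a single edge at $v$, this path cannot have both endpoints outside $K$ (it would have to traverse that edge twice), so $a\in V(K)$ or $b\in V(K)$.

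Finally, for each component $K$ I would choose $u_K\in N\cap V(K)$ and take the $T'$-path $Q_K$ from $v$ to $u_K$; it runs inside $\{v\}\cup V(K)$, so the paths $Q_K$ are pairwise disjoint apart from $v$, and since $u_K\in N_G(w)$ while $w\notin V(T)$, appending the edge $u_Kw$ turns $Q_K$ into a genuine $v$-$w$ path $R_K$ whose internal vertices all lie in $V(K)$. Hence the $R_K$, one per component, form an uncountable family of pairwise internally disjoint $v$-$w$ paths, which is the contradiction sought; therefore $G$ is countable and Theorem~\ref{fctopcompact} applies. I expect the only genuinely delicate points to be the two invocations of the ``uncountable implies uncountable degree'' fact and, above all, the verification that every component of $T'-v$ meets $N$; the remaining steps are routine bookkeeping.
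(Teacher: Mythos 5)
Your proof is correct and takes essentially the route the paper intends: the corollary is stated without proof immediately after Theorem~\ref{fctopcompact}, the implicit point being exactly that a finitely separable $2$-connected graph must be countable, and your argument (a vertex $w$ of uncountable degree, the minimal subtree of a spanning tree of $G-w$ containing $N(w)$, and the resulting uncountable family of pairwise edge-disjoint $v$--$w$ paths contradicting finite separability) supplies that missing countability fact correctly. Nothing further is needed.
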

\begin{lem}\label{uryson}{\rm\cite[Theorem 34.1]{munkres}}{\rm(Urysohn metrization theorem)} Every regular space with a countable basis is metrizable.
\end{lem}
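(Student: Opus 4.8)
The statement is Urysohn's metrization theorem, and since the paper invokes it only as the cited result \cite[Theorem 34.1]{munkres}, a full proof is not strictly needed; nonetheless, here is the plan I would follow. The strategy is the classical three-step one: first promote regularity to normality using the countable basis, then use Urysohn's lemma to manufacture a \emph{countable} family of continuous functions $X\to[0,1]$ that separates points from closed sets, and finally use that family to embed $X$ as a subspace of the metrizable space $[0,1]^{\N}$.

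First I would fix a countable basis $\{B_n\}_{n\in\N}$ and prove that $X$ is normal. Given disjoint closed sets $C$ and $D$, regularity places each point of $C$ inside a basic open set whose closure avoids $D$; as there are only countably many basic sets, one obtains a countable cover $\{U_i\}$ of $C$ with $\overline{U_i}\cap D=\emptyset$, and symmetrically a countable cover $\{V_j\}$ of $D$ with $\overline{V_j}\cap C=\emptyset$. Setting $U_n':=U_n\setminus\bigcup_{j\le n}\overline{V_j}$ and $V_n':=V_n\setminus\bigcup_{i\le n}\overline{U_i}$, the unions $\bigcup_n U_n'$ and $\bigcup_n V_n'$ are disjoint open sets separating $C$ and $D$; hence $X$ is normal.

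Next I would build the separating family. For each ordered pair $(n,m)$ with $\overline{B_n}\subseteq B_m$, normality together with Urysohn's lemma yields a continuous $g_{n,m}\colon X\to[0,1]$ with $g_{n,m}\equiv 0$ on $\overline{B_n}$ and $g_{n,m}\equiv 1$ on $X\setminus B_m$; there are only countably many such pairs, so I can list the resulting functions as $\{f_k\}_{k\in\N}$. The property to verify is that this family separates points from closed sets: if $x\notin C$ with $C$ closed, choose a basic $B_m$ with $x\in B_m\subseteq X\setminus C$ and, by regularity, a basic $B_n$ with $x\in B_n\subseteq\overline{B_n}\subseteq B_m$; the corresponding $f_k=g_{n,m}$ then has $f_k(x)=0$ and $f_k\equiv 1$ on $C$.

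Finally, I would define $F\colon X\to[0,1]^{\N}$ by $F(x)=\bigl(f_k(x)\bigr)_{k\in\N}$, with $[0,1]^{\N}$ given the product topology, which is metrizable as a countable product of metric spaces. Then $F$ is continuous (each coordinate is), injective (since $X$ is $T_1$, for $x\ne y$ some $f_k$ separates $x$ from the closed set $\{y\}$), and open onto its image (given open $W\ni x$, pick $f_k$ with $f_k(x)=0$ and $f_k\equiv 1$ on $X\setminus W$; then $F(X)\cap\pi_k^{-1}\bigl([0,1)\bigr)$ is a neighbourhood of $F(x)$ inside $F(W)$). Hence $F$ is an embedding, so $X$ is homeomorphic to a subspace of a metrizable space and is therefore metrizable. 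The step I expect to require the most care is the passage from regularity plus second countability to normality (the staircase construction), together with checking that the single countable family $\{f_k\}$ simultaneously delivers injectivity and the open-map property; once that separating property is secured, the embedding into the Hilbert cube is routine.
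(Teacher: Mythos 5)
Your proposal is correct and is essentially the standard proof of Urysohn's metrization theorem as given in the cited source (Munkres, Theorem 34.1): normality via the staircase construction, a countable separating family of Urysohn functions indexed by pairs of basis elements, and the resulting embedding into $[0,1]^{\N}$. The paper itself offers no proof and simply cites the result, so there is nothing to compare beyond noting that your argument matches the textbook one; the only convention to keep in mind is that ``regular'' here includes $T_1$, which you correctly use for injectivity of the embedding.
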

\noindent Note that every Hausdorff compact space is normal and so it is regular.
Now let $G$ be a countable graph.
Theorem \ref{fctopcompact} implies that ($\widetilde{G}$, {\sc{IFCtop}}) is a regular space and by theorem \ref{uryson}, we have the following theorem.
\begin{thm}\label{metrizable}
Let $G$ be a countable graph. Then ($\widetilde{G}$, {\sc{IFCtop}}) is metrizable.
\end{thm}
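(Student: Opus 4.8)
The plan is to invoke the Urysohn metrization theorem (Lemma \ref{uryson}), so it suffices to show that $(\widetilde{G},{\sc{IFCtop}})$ is regular and has a countable basis. Regularity I would deduce from compactness. By Theorem \ref{fctopcompact} the space $(G,{\sc{FCtop}})$ is compact, and $(\widetilde{G},{\sc{IFCtop}})$ is by construction the image of $(G,{\sc{FCtop}})$ under the continuous surjective quotient map $q$ collapsing each $\sim$-class to a point; hence $(\widetilde{G},{\sc{IFCtop}})$ is compact. It is also Hausdorff, which is precisely the purpose of the identification: if $[x]\neq[y]$ then by definition some finite cut $C=(A,B)$ separates $x$ from $y$, so $x$ and $y$ lie in distinct components of $\Vert G\Vert\setminus\mathring{C}$, and the complements of the corresponding basic closed sets give disjoint open neighbourhoods of $x$ and $y$; since these neighbourhoods are unions of $\sim$-classes they descend through $q$ to disjoint open neighbourhoods of $[x]$ and $[y]$. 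A compact Hausdorff space is normal, in particular regular, so the first hypothesis of Lemma \ref{uryson} is met.

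For the countable basis I would argue on the level of ${\sc{FCtop}}$ and then transport the basis through $q$. Because $G$ is countable, $E(G)$ is countable, so there are only countably many finite sets of edges; removing finitely many edges from the connected graph $G$ leaves finitely many components, so each finite edge set underlies only finitely many finite cuts, and each finite cut $C$ leaves finitely many components of $\Vert G\Vert\setminus\mathring{C}$. Hence the family of all basic closed sets of ${\sc{FCtop}}$ is countable, and so is the family $\mathcal{S}$ of their complements, which is a subbasis of open sets for ${\sc{FCtop}}$; the finite intersections of members of $\mathcal{S}$ form a countable basis, so $(G,{\sc{FCtop}})$ is second countable.

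The link between the two levels is the following point, which I would verify: every basic closed set $F$ of ${\sc{FCtop}}$ is \emph{saturated}, i.e.\ a union of $\sim$-classes. Indeed, if $x$ lies in the component $F$ of $\Vert G\Vert\setminus\mathring{C}$ and $x\sim y$, then $y$ cannot lie in another component (the finite cut given by the edge boundary of $V(F)$ would otherwise separate $x$ from $y$), and a vertex dominating an end $\omega$ must lie in the component in which $\omega$ lives; so $y\in F$. Since the members of $\mathcal{S}$ are therefore saturated and preimages commute with intersections, the sets $q(S)$ with $S\in\mathcal{S}$, together with their finite intersections, constitute a countable basis of $(\widetilde{G},{\sc{IFCtop}})$. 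Having now a regular space with a countable basis, Lemma \ref{uryson} yields that $(\widetilde{G},{\sc{IFCtop}})$ is metrizable.

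The step I expect to demand the most care is the interplay between the inseparability relation $\sim$ and the basic (closed) sets of ${\sc{FCtop}}$: both the Hausdorff property in the first paragraph and the descent of the countable subbasis in the third rest on knowing that separating finite cuts survive the quotient and that the basic closed sets are saturated. Everything else is the routine formalism of quotients of compact spaces combined with Urysohn's theorem; once the saturation/separation bookkeeping is pinned down, the proof is short.
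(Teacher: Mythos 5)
Your proof takes the same route as the paper --- compactness from Theorem \ref{fctopcompact}, Hausdorffness from the identification, hence regularity of the compact Hausdorff quotient, plus a countable basis, then Urysohn --- but you supply the details (second countability via the countably many basic closed sets, and their saturation so that the basis descends through the quotient map) that the paper leaves implicit. One small slip in your Hausdorff argument: if $x$ and $y$ lie in distinct components $F_x$, $F_y$ of $\Vert G\Vert\setminus\mathring{C}$, the complements of the basic closed sets you name are $F_x\cup\mathring{C}$ and $F_y\cup\mathring{C}$, which both contain $\mathring{C}$ and hence are not disjoint; to separate you must additionally split each edge of $C$ at an interior point and attach the appropriate half-open partial edges to each side (these sets are open because {\sc{FCtop}} coincides with {\sc{Etop}} by Theorem \ref{FCTop=ETop}, and they remain saturated since inner points of edges form singleton classes). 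With that repair the argument is complete and matches the paper's.
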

\noindent In the following, we introduce the first family of inverse systems. 
First, we define a family of auxiliary graphs with finitely many vertices. 
Next we study these auxiliary graphs and their connection with the primary graph. 
The following auxiliary graphs were defined for the first time in \cite{mir} for extending and generalizing flow theory of finite graphs to infinite graphs.

\noindent We can imagine our auxiliary graphs in the following way:\\
We consider a partition $\{V_1,\ldots,V_t\}$ of~$G$ such that there are only finitely many cross-edges between these $V_i$'s.
Then we contract all vertices of each partition to a single vertex, but we keep the edges.
In other words, every partition with the above property gives a multi-graph with finitely many vertices.
Next we define these partitions more precisely.
\begin{deff}\label{contraction 2}
Let~$M=\{C_{1},\ldots,C_t\}$ be a finite subset of the space~$\mathcal{B}_{\rm{fin}}(G)$, where~$C_i=(A_i,B_i)$.
Define~$$V(\mathcal{G}_M)=\left\{ X_1\cdots X_t\mid X_{i}\in\{A_{i},B_{i}\}\,\textit{and}\, \bigcap_{i=1}^t X_i\neq\emptyset \right\}.$$
We add edges between~$U_1=X_{1}\cdots X_t$ and~$U_2=X'_{1}\cdots X'_t$  for every edge between 
$\cap_{i=1}^t X_{i}$ and~$\cap_{i=1}^t X'_{i}$ in the original graph~$G$. 
\end{deff}
\noindent Every vertex~$X_1\cdots X_t\in V(\mathcal{G}_M)$ defines with $\bigcap_{i=1}^tX_i$ a subset of $V(G)$ and $V(\mathcal G_M)$ admits a partition of $V(G)$ and there are finitely many cross-edges between these partition, as we said before the definition.\\
For simplicity, we define~${\Phi(U)=\cap_{i=1}^t X_{i}}$, for every vertex~$U=X_{1}\cdots X_t$ of~$\mathcal{G}_M$.\\

\noindent For a given finite subset~$M$ of~$\mathcal{B}_{\rm{fin}}(G)$, we define a natural map~$\phi_M$ from ${\Vert{G}\Vert\cup\Omega'(G)}$ to $\Vert\mathcal{G}_M\Vert$.
First, we define~$\phi_M$ on the set of vertices of~$G$. For every vertex~$u\in V$, we associate a unique vertex~$U$ of~$\mathcal{G}_M$. 
Consider a finite cut~$C=(A,B)$ in~$M$. 
By the definition of~$C$, either~$A$ or~$B$ 
should contain~$u$ and we do this for every finite cut in~$M$. 
Let~$X_i$ be the suitable part containing~$u$ for 
$i=1,\ldots,t$. We define~$\phi_M(u)=X_1\cdots X_t$. 
For edge-ends, we can do the same. 
For a given end~$\omega$, one of~$A$ or~$B$ of a cut~$(A,B)$ should contain a tail of a ray corresponding to the end~$\omega$ and with using an analogous argument, we can build up the unique word containing~$\omega$. 
Now, it is clear how we have to define the set of edges. 
If~$uv\in E(G)$, then~$\phi_M(uv)=UV$, where~$U$ and~$V$ are the corresponding vertices to~$u$ and~$v$ respectively.
Thus we have the following lemma.
\begin{lem}\label{contraction map}
With the above notations the following holds:
\begin{enumerate}[\rm (i)]
\item The map~$\phi_M$ is surjective.
\item The restriction of~$\phi_M$ from~${E(G)}\cup \mathring{E}(G)$ to~${E(\mathcal{G}_M)}\cup \mathring E(\mathcal{G}_M)$ is  identity.
\end{enumerate}
\end{lem}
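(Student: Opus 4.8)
The plan is to prove both parts directly from Definition~\ref{contraction 2} and the explicit description of $\phi_M$, treating the vertex map, the edge map, and the end map separately.

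For part (i), surjectivity, the key observation is that $V(\mathcal{G}_M)$ was \emph{defined} to consist precisely of the words $X_1\cdots X_t$ with $\bigcap_{i=1}^t X_i\neq\emptyset$. So given any vertex $U=X_1\cdots X_t$ of $\mathcal{G}_M$, the set $\Phi(U)=\bigcap_{i=1}^t X_i$ is a nonempty subset of $V(G)$; picking any $u\in\Phi(U)$, the definition of $\phi_M$ on vertices forces the $i$-th letter of $\phi_M(u)$ to be the side of $C_i=(A_i,B_i)$ containing $u$, which is exactly $X_i$. Hence $\phi_M(u)=U$, so every vertex of $\mathcal{G}_M$ is hit. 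This already gives surjectivity onto $V(\mathcal{G}_M)$; to get surjectivity onto all of $\Vert\mathcal{G}_M\Vert$ (including inner points of edges and, if one wants, ends) one notes that each edge $UV\in E(\mathcal{G}_M)$ arises, by the edge rule in Definition~\ref{contraction 2}, from an edge $uv\in E(G)$ with $\phi_M(u)=U$, $\phi_M(v)=V$, and $\phi_M$ is then declared to be the identity $(0,1)\to(0,1)$ on the interior $\mathring{uv}\to\mathring{UV}$, so every inner point is in the image. First I would dispose of the vertex case, then invoke the edge rule for the rest.

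For part (ii), the statement is essentially immediate from the construction: an edge $uv$ of $G$ is sent to the edge $UV$ of $\mathcal{G}_M$ joining the images of its endpoints, and the definition stipulates that $\phi_M$ restricted to the open edge $\mathring{uv}$ is the canonical identification with $\mathring{UV}$ (both carrying the topology of $(0,1)$). The only thing to check is that this is well-defined as a map onto $E(\mathcal{G}_M)\cup\mathring{E}(\mathcal{G}_M)$, i.e. that distinct edges of $G$ incident to the same pair of parts give rise to distinct parallel edges of $\mathcal{G}_M$ — but this is exactly why $\mathcal{G}_M$ is taken to be a multigraph: Definition~\ref{contraction 2} adds one edge of $\mathcal{G}_M$ for \emph{each} edge of $G$ between $\bigcap X_i$ and $\bigcap X_i'$, so the correspondence $uv\mapsto UV$ is a bijection from $E(G)$ onto $E(\mathcal{G}_M)$, under which inner points correspond to inner points by the identity. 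Thus the restriction is (after this bookkeeping identification) literally the identity.

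I do not expect a genuine obstacle here; the statement is a packaging lemma that records consequences of the definition for later use. The one point requiring a little care — and the closest thing to a ``hard part'' — is being honest about what ``identity'' means in (ii): $\phi_M$ is not literally the identity function but rather the identity under the canonical bijection $E(G)\cup\mathring{E}(G)\to E(\mathcal{G}_M)\cup\mathring{E}(\mathcal{G}_M)$ furnished by the multigraph construction, and one should make sure no two edges of $G$ get merged (they do not, precisely because $\mathcal{G}_M$ is a multigraph). I would state this identification explicitly once and then the proof of (ii) is a single sentence.
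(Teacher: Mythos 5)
Your proposal is correct, and it matches the paper's treatment: the paper gives no separate proof of this lemma, presenting it as an immediate consequence of the construction of $\mathcal{G}_M$ and $\phi_M$ (``Thus we have the following lemma''), which is exactly the routine verification you carry out. Your explicit remark that ``identity'' in (ii) means identity under the canonical bijection $E(G)\to E(\mathcal{G}_M)$ supplied by the multigraph construction is a sensible clarification of what the paper leaves implicit.
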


\noindent  It is worthy to mention that~$\phi_M^{-1}(V)\cap\phi_M^{-1}(U)=\emptyset$ for vertices~$U\neq V$ of~$\mathcal {G}_M$. Therefore we get a partition for vertices of~$G$.\\

\noindent In the following, we topologize~$\Vert\mathcal{G}_M\Vert$. 
First we discuss the case when~$G$ is a countable graph.
In order to put our topology on~$\Vert\mathcal{G}_M\Vert$, we need to define topologies for three special subgraphs.
The first one is well-known as the Hawaiian earring or infinite earring see \cite[Example 1 of page 436]{munkres}, 
the second one is the finite version of the Hawaiian earring and the last graph is~$K_2$.
The Hawaiian Earring is defined to be a subspace of~$\bbR^2$ consisting of the union of planar circles~$c_i$ of radius~$1/i$, tangent to the x-axis at the origin for~$i\in\bbN$.
This space is well-known to have interesting properties, see \cite{can}. 
Note that the Hawaiian earring is a closed and  bounded subset of~$\bbR^2$ and so it is compact; but its fundamental group is uncountable.
The finite version of the Hawaiian earring is a subspace of~$\bbR^2$ consisting of the union of planar circles~$c_i$ of radius~$1/i$, tangent to the $x$-axis at the origin for~$i\in N$ where~$N$ is a finite subset of~$\bbN$.
We consider the same topology for the finite version of the Hawaiian earring.
Moreover, every~$K_2$ is endowed with the topology of the closed interval~$[0,1]$. 
We denote the subgraphs which are homeomorphic to the Hawaiian earring and a finite version of Hawaiian earring by~$\mathcal{H}_{\aleph_0}$ and~$\mathcal{H}_N$, respectively. \\

\begin{center}
\begin{tikzpicture}
\draw (0,-2) node [circle,fill, inner sep=2pt] {};
      \draw[solid] (0,-1.75) circle (0.25);
 \draw[solid] (0,-1.5) circle (0.5);
    \draw[solid] (0,-1) circle (1);
    \draw[solid] (0,0) circle (2);
    \draw  (0,-1.69) node(){$\mathbf\vdots$};

\end{tikzpicture}
\end{center}
\begin{center}
Figure 3.1 The Hawaiian earring
\end{center}
\noindent Now, we are ready to topologize~$\Vert\mathcal{G}_M\Vert$ for a given finite subset~$M$ of~$\mathcal{B}_{\rm{fin}}(G)$. 
Since~$\Vert\mathcal{G}_M\Vert$ is a union of finitely many of copies of the subgraphs~$\mathcal{H}_{\aleph_0}$,~$\mathcal{H}_N$ and~$K_2$, we will not have any problem to use the coherent topology here, i.e.~$U$ is an open (closed) set in~$\Vert\mathcal{G}_M\Vert$ if and only if the intersection of~$U$ with each of these subspaces is an open (closed) set. 
Therefore we obtain a compact space, as~$\Vert\mathcal{G}_M\Vert$ is a union of finitely many compact spaces. 
Note that~$\Vert\mathcal{G}_M\Vert$ is a Hausdorff space as well. 
We denote this topology by~$\tau_M$, as~$\Vert\mathcal{G}_M\Vert$ is constructed by the finite set of cuts~$M$. \\
 
\noindent Now, let~$G$ be an uncountable graph. 
Then we cannot embed the Hawaiing earring in~$\bbR^2$, i.e.~we are not able to embed uncountable many loops in $\mathbb R^2$ and use the induced topology of~$\bbR^2$. 
Now we define a new topological space for the uncountable version of the Hawaiing earring.\footnote{For the sake of simplicity, we use a closed interval whose the cardinal is $2^{\aleph_0}$, however the method works for any compact space with an arbitrary cardinal $\kappa$.} 
Suppose that~${I=[0,1]}$.
Fix a point of~$S^1$ say~$x_0$. Consider the quotient space~$(S^1\times I)/(\{x_0\}\times I)$. 
So we have an injective map~$\iota$ from this 
space to the product of~$S^1$,~$I$ times. Let~$X_i$ be homeomorphic to~$S^1$ for every~$i\in I$.  Then we have the following map.
\begin{align*}
 \iota\colon &\frac{S^1\times I}{\{x_0\}\times I}\longrightarrow \prod_{i\in I} X_i \\
& (x,i)\xmapsto{\phantom{L^\infty(T)}} \left(j\to \left\{ \begin{array}{lcl}
x_0 & \mbox{for}
& i\neq j \\ x & \mbox{for} & i=j 
\end{array}\right.\right)
\end{align*}
We claim that the image~$\iota$ i.e. $\{(x_j)_{j\in I}\mid \exists \textit{ at most one }j\in I\textit{ with }x_j\neq x_0 \}$ is a closed set in~$\prod_{i\in I} X_i$. 
Assume that~$(x_{\alpha})_{\alpha\in I}$ is an element of the complement of~${\rm{Im}}{(\iota)}$. 
So there are indices~$\alpha_0$ and~$\alpha_1$ in~$I$ such that both of~$x_{\alpha_0}$ 
and~$x_{\alpha_1}$ are not~$x_0$. We can find open sets~$O_{\alpha_0}$ and~$O_{\alpha_1}$ containing~$x_{\alpha_0}$ 
and~$x_{\alpha_1}$ without~$x_0$, respectively. Now put~$O:=\prod O_{i}$ 
where~$O_{i}=X_i$ for any~$i\neq \alpha_1,\alpha_2$. It is not hard to see that~$O$ is an open set of the complement of~${\rm{Im}}{(\iota)}$ containing~$(x_{\alpha})_{\alpha\in I}$ and the claim is proved. Since~${\rm{Im}}{(\iota)}$ is a closed subset of a compact space, it is compact.
We now define the~${\rm{Im}}{(\iota)}$ as the uncountable Hawaiing earring and we denote it by $\mathcal{H}_{2^\aleph_0}$.
With an analogous method we define $\mathcal{H}_{\kappa}$, for an arbitrary cardinal $\kappa$.\\
Let us get back to our objective which is defining a topology for~$\mathcal{G}_M$. 
Whenever~$G$ is uncountable, we benefit from $\mathcal{H}_{\kappa}$ for a suitable $\kappa$ and like the above case we obtain a compact Hausdorff topology for~$\Vert\mathcal{G_M}\Vert$.\\
  
\noindent Now we summarize all the above discussion on the following theorem.
\begin{thm}\label{G_M compact}
Let~$M$ be a finite subset of~$\mathcal{B}_{\rm{fin}}(G)$. Then~$(\Vert\mathcal{G}_M\Vert,\tau_M)$, is a  compact Hausdorff space.  
\end{thm}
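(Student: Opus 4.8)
The plan is to realise $\Vert\mathcal{G}_M\Vert$ with the topology $\tau_M$ as a quotient of a \emph{finite} disjoint union of compact Hausdorff spaces, and then to invoke the classical fact that the quotient of a compact Hausdorff space by a closed equivalence relation is again compact Hausdorff.

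First I would record the structure of $\mathcal{G}_M$. Since $M=\{C_1,\dots,C_t\}$ is finite, $V(\mathcal{G}_M)$ has at most $2^t$ elements. If $U\ne V$ are vertices of $\mathcal{G}_M$, their defining words differ in some coordinate $i$, so every edge of $G$ between $\Phi(U)$ and $\Phi(V)$ lies in the finite cut $C_i$; hence there are only finitely many edges between distinct vertices, and in particular $\mathcal{G}_M$ has only finitely many non-loop edges. A single vertex $U$, on the other hand, may carry arbitrarily (even uncountably) many loops, namely the edges of $G[\Phi(U)]$. Accordingly I decompose $\Vert\mathcal{G}_M\Vert$ into the finitely many pieces: the \emph{star} $S_U$ at each vertex $U$, consisting of $U$ together with the interiors of all loops at $U$; and the closure $\overline{e}$ of each non-loop edge $e$. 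By construction each $S_U$ is a single point, or a finite Hawaiian earring $\mathcal{H}_N$, or $\mathcal{H}_{\aleph_0}$, or some $\mathcal{H}_{\kappa}$, while each $\overline{e}$ is a copy of $K_2$; by the discussion preceding the theorem — and, in the uncountable case, by the fact established there that $\mathcal{H}_{\kappa}$ is a closed subspace of a product of circles — every one of these pieces is compact Hausdorff.

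Unwinding the definition of the coherent topology, $\tau_M$ is exactly the quotient topology induced by the natural surjection $q\colon\bigsqcup_{Y}Y\to\Vert\mathcal{G}_M\Vert$, where the disjoint union is taken over the pieces $Y$ just described: a subset of $\Vert\mathcal{G}_M\Vert$ is $\tau_M$-open precisely when it meets every piece in an open set, i.e.\ precisely when its $q$-preimage is open. The domain $\bigsqcup_{Y}Y$ is a finite disjoint union of compact Hausdorff spaces, hence compact Hausdorff, so $\Vert\mathcal{G}_M\Vert$ is compact, being its continuous image. For Hausdorffness it suffices that the equivalence relation $R=(q\times q)^{-1}(\Delta)$ be closed in $(\bigsqcup_{Y}Y)^2$. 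The only points of $\Vert\mathcal{G}_M\Vert$ lying in more than one piece are the (at most $2^t$) vertices of $\mathcal{G}_M$: indeed $S_U\cap S_V=\emptyset$ for $U\ne V$, $S_U\cap\overline{e}\subseteq\{U\}$, and $\overline{e}\cap\overline{e'}$ consists of at most two common endpoints. Moreover each vertex lies in only finitely many pieces, so $q^{-1}(U)$ is finite for every vertex $U$. Therefore $R$ is the union of the diagonal $\Delta$ — closed since $\bigsqcup_{Y}Y$ is Hausdorff — with a finite set of off-diagonal pairs, and hence is closed; the classical fact quoted above then yields that $\Vert\mathcal{G}_M\Vert$ is compact Hausdorff.

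The main obstacle is really just the bookkeeping of the second paragraph: checking that distinct vertices of $\mathcal{G}_M$ are joined by only finitely many edges, that the edge-interiors assembled at a vertex form a space homeomorphic to the appropriate Hawaiian earring, and — in the uncountable regime — that $\mathcal{H}_{\kappa}$ as defined there is genuinely compact Hausdorff; once the cover is correctly set up, the coherent topology does the rest. If one prefers to avoid the quotient fact, Hausdorffness can instead be verified directly by cases: two points in one edge-interior; two points in distinct edge-interiors, separated by the disjoint $\tau_M$-open arcs $\mathring{e}$ and $\mathring{e'}$; a vertex and an interior point, separated using that a closed subarc of $\mathring{e}$ is $\tau_M$-closed; and two distinct vertices $U\ne V$, separated by taking around each a neighbourhood of the wedge point inside its star together with short half-open arcs of its finitely many incident non-loop edges, chosen short enough to be pairwise disjoint. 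This is more elementary but longer.
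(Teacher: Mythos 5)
Your proposal is correct and follows essentially the same route as the paper, which likewise treats $\Vert\mathcal{G}_M\Vert$ as a finite union of compact Hausdorff pieces ($\mathcal{H}_{\aleph_0}$, $\mathcal{H}_N$, $\mathcal{H}_\kappa$ and copies of $K_2$) carrying the coherent topology and deduces compactness from the finiteness of the cover. Your only real addition is the verification of Hausdorffness via the closed-equivalence-relation criterion for the quotient map $q$, a point the paper merely asserts; that supplement is welcome and the supporting bookkeeping (finitely many non-loop edges, finite fibres over vertices) is sound.
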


\noindent Recall that we defined the map~$\phi_M$ from~$(\Vert{G}\Vert\cup\Omega'(G),{\textit{\sc{FCtop}}})$ to~$(\Vert\mathcal{G}_M\Vert, \tau_M)$ for a finite subset~$M$ of~$\mathcal{B}_{\rm{fin}}(G)$.\\
\noindent The next theorem reveals the relationship between the space~$(\Vert{G}\Vert\cup\Omega'(G),{\textit{\sc{FCtop}}})$ with the space~$(\Vert\mathcal{G}_M\Vert,\tau_M)$. 
\begin{thm}\label{fctopinverselimit}
The map~$\phi_M\colon(\Vert{G}\Vert\cup\Omega'(G),{\textit{\sc{FCtop}}})\to(\Vert\mathcal{G}_M\Vert, \tau_M)$ is  continuous.
\end{thm}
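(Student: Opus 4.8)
The plan is to establish continuity by checking that $\phi_M^{-1}(Z)$ is closed in {\sc{FCtop}} for every $Z$ in a convenient family $\mathcal{S}$ of closed sets that \emph{generates} the closed sets of $(\Vert\mathcal{G}_M\Vert,\tau_M)$ --- meaning that every $\tau_M$-closed set is an intersection of finite unions of members of $\mathcal{S}$ --- which is enough because taking preimages commutes with arbitrary intersections and with finite unions. Since $\tau_M$ is the topology coherent with the finitely many copies of $\mathcal{H}_{\aleph_0}$ (resp.\ $\mathcal{H}_\kappa$), $\mathcal{H}_N$ and $K_2$ that make up $\Vert\mathcal{G}_M\Vert$, a short computation with the topology of the Hawaiian earring --- using that a closed subset of a Hawaiian earring meeting infinitely many loops outside the wedge point must contain the wedge point --- shows that one may take $\mathcal{S}$ to consist of: \textbf{(a)} the closed sub-arcs $[a,b]\subseteq\mathring e$ of single edge-interiors of $\mathcal{G}_M$; and \textbf{(b)} for each vertex $U$ of $\mathcal{G}_M$, the closed stars $\{U\}\cup\bigcup_{e}\sigma_e$, the union ranging over the edges $e$ of $\mathcal{G}_M$ incident with $U$, where each $\sigma_e$ is a closed arc of $e$ with $U$ as an endpoint (possibly trivial, possibly all of $e$). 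I would also use throughout that {\sc{FCtop}}, as defined, makes each $\mathring e$ an open subspace carrying the standard topology of $(0,1)$, so that the complement of an open sub-arc of an edge-interior is automatically {\sc{FCtop}}-closed.

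For type (a) there is nothing to do: by Lemma~\ref{contraction map}(ii) the map $\phi_M$ is the identity on $\mathring E(G)$ and maps $\mathring e$ homeomorphically onto the corresponding $\mathring e\subseteq\Vert\mathcal{G}_M\Vert$, so $\phi_M^{-1}([a,b])$ is the same closed sub-arc of $\Vert G\Vert$, which is {\sc{FCtop}}-closed. For type (b), fix a vertex $U=X_1\cdots X_t$ of $\mathcal{G}_M$, where $X_i$ is the side of the cut $C_i=(A_i,B_i)\in M$ recorded in slot $i$. By the construction of $\phi_M$, a vertex or edge-end of $G$ is sent to $U$ precisely when it lies on, respectively has a tail on, the $X_i$-side of $C_i$ for every $i$; and an edge of $\mathcal{G}_M$ at $U$ is a loop precisely when its $G$-preimage $u_0v_0$ has $u_0,v_0\in\Phi(U)$ --- hence crosses none of the $C_i$ --- whereas any non-loop edge at $U$ crosses at least one $C_i$. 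Using these observations one rewrites $\phi_M^{-1}\big(\{U\}\cup\bigcup_{e}\sigma_e\big)$ as an intersection $\bigcap_{i=1}^{t}Z_i\;\cap\;\bigcap_{e_0}W_{e_0}$, where $Z_i$ consists of the finitely many components of $G\setminus\mathring{C_i}$ lying on the $X_i$-side, each together with the edge-ends it carries and with the partial or whole cut-edges of $C_i$ prescribed by the $\sigma_e$; since $C_i$ has only finitely many edges, $Z_i$ is a finite union of {\sc{FCtop}}-closed sets and so is {\sc{FCtop}}-closed. Each $W_{e_0}$ is the {\sc{FCtop}}-closed complement of an open sub-arc truncating a loop-edge $e_0$ of $G$ inside $\Phi(U)$ down to its prescribed arc. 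This displays $\phi_M^{-1}\big(\{U\}\cup\bigcup_{e}\sigma_e\big)$ as an intersection of {\sc{FCtop}}-closed sets, hence it is {\sc{FCtop}}-closed, and $\phi_M$ is continuous.

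I expect the main obstacle to be bookkeeping rather than anything conceptual: one must track, for each edge of $\mathcal{G}_M$ incident with $U$, which of $C_1,\dots,C_t$ its $G$-preimage crosses, match the partial-edge data attached to the $\mathcal{C}_{C_i}$-type basic closed sets with the arcs $\sigma_e$, and verify that the displayed intersection equals the preimage and nothing larger. The single genuinely graph-theoretic input is that each $C_i\in M$ is a \emph{finite} cut, so that only finitely many edges --- and hence only finitely many partial edges --- leave any component of $G\setminus\mathring{C_i}$; once this is granted the remaining checks are routine though somewhat tedious. A secondary point worth verifying carefully is the stated description of $\mathcal{S}$, which relies only on the elementary structure of closed sets in a Hawaiian earring together with the fact that $\Vert\mathcal{G}_M\Vert$ is glued from finitely many such pieces.
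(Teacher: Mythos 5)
Your argument is correct in outline, and the two places you flag as needing care --- the subbasis claim for $\tau_M$ and the bookkeeping in the displayed intersection --- are exactly the right ones to worry about; both go through. The route is, however, organized differently from the paper's. The paper verifies continuity on the basic \emph{open} sets of $\tau_M$: for a basic open set $O$ around a vertex $v$ of $\mathcal{G}_M$ it identifies the finite cut formed by the non-loop edges at $v$, and exhibits the complement of $\phi_M^{-1}(O)$ as a \emph{finite union} of basic {\sc{FCtop}}-closed sets (the components of the far side together with the ends living in them) plus finitely many closed partial edges, treating the finite-degree and infinite-degree cases separately and invoking the Hawaiian-earring topology only to see that $O$ meets finitely many edges at $v$ in proper sub-arcs. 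You instead dualize: you check preimages on a closed subbasis of arcs and closed stars, and exhibit $\phi_M^{-1}$ of a closed star as an \emph{intersection} indexed by the cuts $C_1,\dots,C_t\in M$, each factor $Z_i$ being a finite union of basic closed sets. What your version buys is uniformity --- the finite- and infinite-degree cases are not separated, since an infinite Hawaiian earring at $U$ only ever contributes an infinite intersection of closed sets, which is harmless --- and it works cut-by-cut over $M$ rather than having to form the boundary cut of $\Phi(U)$; what it costs is the preliminary verification that $\mathcal{S}$ generates the closed sets of the coherent topology, which is where the one nontrivial feature of $\tau_M$ (a neighbourhood of a wedge point contains all but finitely many loops entirely) is consumed. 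Two small points to tighten: for a loop $e_0$ at $U$ the set $\sigma_{e_0}$ should be allowed to be the complement in $e_0$ of an open sub-arc of $\mathring{e}_0$ (two closed arcs meeting at $U$), as your description of $W_{e_0}$ already presupposes; and both your proof and the paper's rest on the unproved but true assertion that closed partial edges are closed in {\sc{FCtop}}, so you are no worse off there.
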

\begin{proof}
First, assume that~$I$ is an open set around an inner point~$x$ of an edge~$e$. 
Without loss of generality, we can assume that~$I$ is an open interval and $I\subset e$. 
Since~$\phi_M$ is the identity map from $E(G)\cup \mathring{E}(G)$ to ${E(\mathcal{G}_M)}\cup \mathring E(\mathcal{G}_M)$, the preimage of~$I$ is an open set in~${\Vert G\Vert}$. 
So~$\phi^{-1}_M(I)$ is open in~$\Vert{G}\Vert\cup\Omega'(G)$.
Now let~$O$ be a basic open set containing a vertex~$v$ of~$\mathcal{G}_M$. 
If the degree of~$v$ is finite, then the preimage is a union of some vertices and some parts of edges~$\{e_1,\ldots,e_{m-1}\}\cup\{e_1',\ldots,e_t'\}$ and entire edges~$\{e_m,\ldots,e_n\}$, where the boundary of $\overline{\phi^{-1}_M(O)}$ touches each $e_i$ twice and each $e_j'$ once for $i=1,\ldots,m-1$ and $j=1,\ldots,t$, see Figure 3.2.
We may suppose that the set~$\{e_1',\ldots,e_t'\}$ forms a finite cut which can be presented by~$C=(A,B)$. 
In addition we can assume that the preimage of~$O$ is contained in~$G[A]\cup C$. 
We show that the complement of~$\phi^{-1}_M(O)$ is a closed set. 
Note that~$G[B]$ is a union of finitely many components.
For obtaining the complement of~$O$, we need to add some partial edge to the closure of~$G[B]$ which are some parts of edges~$e_1',\ldots,e_t'$ and we might need to add some parts of the rest of edges. 
But all of these partial edges are closed with~{\sc{FCtop}}, see Figure 3.2.
\begin{center}
\begin{tikzpicture}
\draw (-3,-2) node [circle,fill, inner sep=2pt]{};
\node at ($(-3,-2.3)$){$v$};
\node at ($(-4.3,-2.3)$){$e'_1$};
\node at ($(-4.1,-2.9)$){$e'_2$};
\node at ($(-1.8,-2.7)$){$e'_t$};
\node at ($(-3.5,0.5)$){$e_1$};
\node at ($(-3.4,0.1)$){$e_2$};
\node at ($(-3.1,-0.7)$){${\scriptstyle e_{m-1}}$};
\draw[solid] (-3,-1.850) circle (0.15);{e}
\draw  (-3,-0.3) node(){$\mathbf\vdots$};
\draw (-3,-0.8) ellipse (1.1cm and 1.2cm);
\draw[solid] (-3,-1) circle (1);
\draw (-3,-1.4) ellipse (0.33cm and 0.6cm);
\draw (-3,-2)--(-4.2,-2.6) ;
\draw (-3,-2)--(-2,-2.5) ;
\draw (-3,-2)--(-3.9,-3);
\draw  (-2.98,-2.55) node(){$\mathbf\cdots$};
\draw (-3,-1.66) ellipse (0.2cm and 0.35cm);
\draw  (-3,-1.40) node(){$\mathbf\vdots$};
\draw[thick,dashed] (-3,-2) circle (0.75);

\node at ($(-1,-1.5)$){$\Longrightarrow$};

\draw[thick,dashed] (3.5,-1.5) ellipse (3cm and 1.5cm);
\draw(2,-1) ellipse (1cm and 0.5cm);
\draw(4.5,-1) ellipse (1.3cm and 0.7cm);
\draw(2.5,-2.2) ellipse (1.3cm and 0.5cm);
\draw (1.5,-0.8) node[circle,fill, inner sep=2pt](w){}node[below=1pt]{$v_{12}$};
\draw (4.5,-0.6) node[circle,fill, inner sep=2pt](z){}node[below=1pt]{$v_{11}$};
\draw (1.5,-2.2) node[circle,fill, inner sep=2pt](u){}node[right=1pt]{$v_{22}$};
\draw (w) edge[out=130,in=90] (z);
\draw (u) arc (45:310:0.8cm and -1cm) (w);
\draw (2.5,-2.5) node [circle,fill, inner sep=2pt]{}node[above=1pt]{$u_2$};
\draw (2.5,-2.5)--(2.2,-3.6);   
\draw (2,-0.8) node [circle,fill, inner sep=2pt]{}node[below=1pt]{$u_1$};
\draw (2,-0.8)--(1.5,0.3); 
\draw (5,-0.8) node [circle,fill, inner sep=2pt]{}node[below=1pt]{$u_t$};
\draw (5,-0.8)--(5.5,0.3);
\draw (5.1,-2.1) node [circle,fill, inner sep=2pt]{}node[above=1pt]{$v_{m-11}$};
\draw (3.2,-2.2) node [circle,fill, inner sep=2pt]{}node[below=1pt]{$v_7$};
\draw (5.5,-1.3) node [circle,fill, inner sep=2pt]{}node[below=1pt]{$v_{m-12}$};
\draw (5.1,-2.1)--(3.2,-2.2);
\draw (5.1,-2.1) arc (180:-125:0.8cm and -1cm);
\draw(5,-2.2) ellipse (0.7cm and 0.3cm);
\node at ($(4,-1.99)$){$e_m$};
\node at ($(2.7,0.4)$){$e_1$};
\node at ($(-0.1,-1.5)$){$e_2$};
\node at ($(1.40,0.2)$){$\scriptstyle e_1'$};
\node at ($(2.1,-3.3)$){$\scriptstyle e_2'$};
\node at ($(5.2,0.2)$){$\scriptstyle e_t'$};
\node at ($(6.2,-3.3)$){$ e_{m-1}$};
\node at ($(-3,-3.1)$){$O$};
\node at ($(3.5,-3.5)$){$\phi^{-1}(O)$};
\end{tikzpicture}
\end{center}
\begin{center}
Figure 3.2
\end{center}
More precisely, each of~$\mathring{e}_i'$ can be regarded as the union of~$(u_i,w_i)\cup[w_i,z_i)$ so that~$(u_i,w_i)$ is included in~$O$. 
In addition, every~$\mathring e_i$ for~$i=1,\ldots,m-1$ is divided by three intervals~$(v_{i1},w_{i1})$,~$[w_{i1},w_{i2}]$ and~$(w_{i2},v_{i2})$ such that we have~${(v_{i1},w_{i1})\cup(w_{i2},v_{i2})\subseteq O}$ and also~$\{e_{m+1},\ldots,e_n\}$ is included in~$O$. 
Thus the complement of~$\phi^{-1}(O)$ is~$$D\cup\bigcup_{i=1}^{m-1}[w_{i1},w_{i2}]\cup\bigcup_{i=1}^t[w_i,z_i),$$
where~$D$ is the union of~$G[B]$ with all ends which live in~$B$. 
Let~$B=C_1\cup\cdots\cup C_l$ where~$C_i$ is a component of~$G\setminus \mathring{C}$. 
Let an end~$\omega$ live in~$C_i$. 
Note that the union~$C_i$ with all ends which live in~$C_i$ is~$\mathcal{C}_C(\omega)$ that is closed by definition. 

\noindent Now suppose that the degree of~$v$ is infinite. 
So there is an infinite Hawaiian earring which occurs at~$v$. 
It is important to notice that there are only finitely many edges incident to~$v$ meeting~$O$. 
In this case there are infinitely many edges inside of~$\phi^{-1}_M(O)$ in spite of the last case. 
Hence with using a similar method which we used in the preceding case, we can show that the complement of~$\phi^{-1}_M(O)$ is a closed set in~$\Vert{G}\Vert\cup\Omega'(G)$.
\end{proof}

\begin{lem}\label{closed}
The map~$\phi_M\colon (\Vert{G}\Vert\cup\Omega'(G), {\textit{\sc{FCtop}}})\to(\Vert\mathcal{G}_M\Vert, \tau_M)$ is closed.
\end{lem}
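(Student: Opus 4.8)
The plan is to derive closedness of $\phi_M$ from compactness, by invoking the standard topological principle that a continuous map from a compact space into a Hausdorff space is automatically closed. All the ingredients have already been assembled: by Theorem~\ref{fctopcompact} the domain $(\Vert G\Vert\cup\Omega'(G),\textsc{FCtop})$ is compact (at least when $G$ is countable; see the remark on generality below), by Theorem~\ref{G_M compact} the target $(\Vert\mathcal{G}_M\Vert,\tau_M)$ is a compact Hausdorff space, and by Theorem~\ref{fctopinverselimit} the map $\phi_M$ is continuous. So the lemma should fall out with almost no extra work.

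Concretely, I would argue as follows. Let $A$ be an arbitrary closed subset of $\Vert G\Vert\cup\Omega'(G)$ in \textsc{FCtop}. A closed subset of a compact space is compact, so $A$ is compact. Since $\phi_M$ is continuous, $\phi_M(A)$ is a compact subset of $(\Vert\mathcal{G}_M\Vert,\tau_M)$. Finally, a compact subset of a Hausdorff space is closed, hence $\phi_M(A)$ is closed in $\tau_M$. As $A$ was an arbitrary closed set, $\phi_M$ is a closed map.

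The only place that needs care — and thus the main obstacle — is whether the compactness of the domain is available in the generality in which one wants to state the lemma. Theorem~\ref{fctopcompact} is proved for countable $G$, so for that case the argument above is complete; if one insists on graphs that are not countable, Theorem~\ref{fctopcompact} does not literally apply, and one should either restrict attention to countable $G$ (as the surrounding development does) or else replace the compactness argument by a direct verification. In the direct approach one takes a basic closed set $C$ of \textsc{FCtop} — a component of $G\setminus\mathring C$ together with the ends living in it, or a basic closed neighbourhood $\mathcal{C}_C(\omega)$ of an end — and checks that $\phi_M(C)$ meets each of the finitely many copies of $\mathcal{H}_{\aleph_0}$, $\mathcal{H}_N$ and $K_2$ comprising $\Vert\mathcal{G}_M\Vert$ in a closed set; using that $\phi_M$ is the identity on edge interiors (Lemma~\ref{contraction map}(ii)), the trace of $\phi_M(C)$ on each such copy is again a subgraph of the same "component-with-finitely-many-partial-edges" shape, which is closed in the coherent topology $\tau_M$. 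This route is noticeably more laborious, so I expect the intended proof to be the short compactness argument above.
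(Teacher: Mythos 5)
Your compactness argument is correct and takes a genuinely different route from the paper. The paper proves the lemma directly: it runs through the three kinds of basic closed sets of \textsc{FCtop} (one around a vertex, one around an inner point of an edge, and a set $\mathcal{C}_C(\omega)$ around an end) and checks in each case that the image under $\phi_M$ is a finite union of Hawaiian earrings, finite Hawaiian earrings and copies of $K_2$ containing no partial edges, hence closed in the coherent topology $\tau_M$ --- essentially the fallback route you sketch at the end. Your main argument (closed subsets of compact spaces are compact, continuous images of compact sets are compact, compact subsets of Hausdorff spaces are closed) is shorter and, importantly, handles \emph{arbitrary} closed sets: the paper's verification only treats basic closed sets, and since images do not commute with the intersections needed to assemble general closed sets from basic ones, that verification does not by itself establish closedness of the map, whereas yours does. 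The price is the hypothesis that the domain is compact, which at this point in the paper is only available for countable $G$ (Theorem~\ref{fctopcompact}); you flag this correctly. One could lift the restriction by combining Theorem~\ref{FCTop=ETop} with the fact, cited in the introduction from Schulz, that \textsc{Etop} is always compact --- neither of which depends on this lemma, so no circularity arises.
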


\begin{proof}
Let~$K$ be a basic closed set around a vertex~$v$ of $G$. 
The image of~$K$ by~$\phi_M$ contains finitely many vertices. 
More precisely,~$\phi_M(K)$ is a union of finitely many Hawaiian earrings, finite Hawaiian earrings and finitely many copies of~$K_2$. 
Note that it does not contain any partial edges and so it is closed. 
Let~$K$ be a closed set around an inner point~$x$ which is included in an edge~$e$. 
Then since~$\phi_M$ is identity on~${E(G)}\cup\mathring{E}(G)$, the set~$\phi_M(K)$ is closed. 
Now let~$\omega$ be an edge-end and let~$\mathcal{C}_C(\omega)$ be an arbitrary basic closed set around~$\omega$ with respect to the finite cut~$C$ in~$M$. 
We show that~$\phi_M(\mathcal{C}_C(\omega))$ is closed in~$(\Vert\mathcal{G}_M\Vert, \tau_M)$. 
The image of~$\mathcal{C}_C(\omega)$ contains finitely many vertices. 
Again~$\phi_M(\mathcal{C}_C(\omega))$ is a union of finitely many Hawaiian earrings, finite Hawaiian earrings and some copies of~$K_2$. 
Note that it does not contain any partial edges. 
Thus the image of the basic closed set~$\mathcal{C}_C(\omega)$ is closed, as desired.  
\end{proof}

\noindent We accomplished to study the connection between topological spaces~$(\Vert{G}\Vert\cup\Omega'(G),{\textit{\sc{FCtop}}})$ and~$(\Vert\mathcal{G}_M\Vert,\tau_M)$. 
Next we investigate the graph~$\mathcal{G}_M$ for different finite subsets~$M$ of~$\mathcal{B}_{\rm{fin}}(G)$.

\begin{lem}\label{M to M'}
Let~$M\subseteq M'$ be two finite subsets of~$\mathcal{B}_{\rm{fin}}(G)$. Then there exists a continuous map~$\psi_{M'M}\colon(\Vert \mathcal{G}_{M'}\Vert,\tau_{M'})\to(\Vert\mathcal{G}_M\Vert,\tau_M)$.
\end{lem}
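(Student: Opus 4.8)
The plan is to produce $\psi_{M'M}$ from the obvious ``forgetful'' relation between the two partitions of $V(G)$ and to deduce its continuity from the fact that $\phi_{M'}$ is a quotient map. Write $M=\{C_1,\dots,C_t\}$ with $C_i=(A_i,B_i)$ and $M'=M\cup\{C_{t+1},\dots,C_{t'}\}$. A vertex of $\mathcal{G}_{M'}$ is a word $U'=X_1\cdots X_t X_{t+1}\cdots X_{t'}$ with $X_i\in\{A_i,B_i\}$ and $\bigcap_{i=1}^{t'}X_i\neq\emptyset$; in particular $\bigcap_{i=1}^{t}X_i\neq\emptyset$, so $U:=X_1\cdots X_t$ is a vertex of $\mathcal{G}_M$ and the $M'$-class $\Phi(U')=\bigcap_{i=1}^{t'}X_i$ is contained in the $M$-class $\Phi(U)=\bigcap_{i=1}^{t}X_i$. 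Since the classes $\Phi(U)$, $U\in V(\mathcal{G}_M)$, partition $V(G)$, this $U$ is the unique vertex of $\mathcal{G}_M$ with $\Phi(U')\subseteq\Phi(U)$, and we set $\psi_{M'M}(U'):=U$; on edges and on their inner points we declare $\psi_{M'M}$ to be the identity, using the canonical identifications $E(\mathcal{G}_{M'})=E(G)=E(\mathcal{G}_M)$ furnished by Lemma~\ref{contraction map}(ii).

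I would then check that this really defines a map $\Vert\mathcal{G}_{M'}\Vert\to\Vert\mathcal{G}_M\Vert$: if $e=uv\in E(G)$ is realized in $\mathcal{G}_{M'}$ as an edge joining $\phi_{M'}(u)$ and $\phi_{M'}(v)$, then by Definition~\ref{contraction 2} it is also realized in $\mathcal{G}_M$ as an edge (possibly a loop) joining $\phi_M(u)$ and $\phi_M(v)$, and $\psi_{M'M}(\phi_{M'}(u))=\phi_M(u)$, $\psi_{M'M}(\phi_{M'}(v))=\phi_M(v)$ by construction; so endpoints are respected. Running through the three kinds of points of $\Vert G\Vert\cup\Omega'(G)$ — vertices, inner points of edges, and edge-ends (for an end $\omega$ one simply forgets the coordinates indexed by $M'\setminus M$ in the word carrying $\omega$) — one reads off the identity $\psi_{M'M}\circ\phi_{M'}=\phi_M$. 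In particular $\phi_M$ is constant on the fibres of $\phi_{M'}$, so one could instead skip the explicit description and obtain $\psi_{M'M}$ at a stroke from the universal property of the quotient map $\phi_{M'}$; I find it cleaner to keep the explicit formula for later use.

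For continuity, recall that $\phi_{M'}$ is continuous (Theorem~\ref{fctopinverselimit}), surjective (Lemma~\ref{contraction map}(i)) and closed (Lemma~\ref{closed}); a continuous closed surjection is a quotient map, so $\tau_{M'}$ is exactly the quotient topology on $\Vert\mathcal{G}_{M'}\Vert$ coinduced by $\phi_{M'}$. Since $\psi_{M'M}\circ\phi_{M'}=\phi_M$ is continuous into $(\Vert\mathcal{G}_M\Vert,\tau_M)$ by Theorem~\ref{fctopinverselimit} again, the universal property of quotient maps gives that $\psi_{M'M}$ is continuous. (Alternatively, $\tau_{M'}$ is coherent with the finitely many compact subgraphs of $\mathcal{G}_{M'}$ that are copies of $\mathcal{H}_{\kappa}$, $\mathcal{H}_N$ and $K_2$, and on each such piece $\psi_{M'M}$ is the identity on edges while possibly identifying some endpoints, hence continuous; but the quotient argument is shorter.) I do not anticipate a genuine obstacle: the only point needing care is the bookkeeping showing that $\psi_{M'M}$ is well defined on edges — that an edge of $\mathcal{G}_{M'}$ collapses to an edge or loop of $\mathcal{G}_M$ between the images of its endpoints — together with the observation that $\phi_{M'}$ is a quotient map, for which the hypotheses are precisely the three results just cited. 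Finally, the same description makes the compatibility $\psi_{M'M}\circ\psi_{M''M'}=\psi_{M''M}$ for $M\subseteq M'\subseteq M''$ transparent, which is what is needed to assemble the inverse system in the next step.
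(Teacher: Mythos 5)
Your proposal is correct and follows essentially the same route as the paper: the same forgetful definition of $\psi_{M'M}$ on vertices via $\Phi(V')\subseteq\Phi(W)$ and the identity on edges, the same verification of $\psi_{M'M}\circ\phi_{M'}=\phi_M$ on vertices, inner points and ends, and the same continuity argument. The paper unpacks your ``quotient map'' step by hand — for $K$ closed in $\tau_M$ it writes $\psi_{M'M}^{-1}(K)=\phi_{M'}\bigl(\phi_M^{-1}(K)\bigr)$ and invokes Theorem~\ref{fctopinverselimit}, Lemma~\ref{closed} and surjectivity of $\phi_{M'}$, which is exactly your observation that a continuous closed surjection is a quotient map.
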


\begin{proof}
Let~$V'\in V(\mathcal{G}_{M'})$. 
Then~$\Phi(V')$\footnote{For the definition of~$\Phi$, see after Definition \ref{contraction 2}.} is a subset of~$V(G)$ which is obtained by elements of~$M'$. 
On the other hand, every element of~$M$ is an element of~$M^\prime$. 
Hence we can find a word~$W$ in~$\mathcal{G}_M$ such that~$\Phi(V')$ is included in~$\Phi(W)$. 
Thus we assign the vertex~$V'$ of~$\mathcal{G}_{M'}$ to the vertex~$W$ of~$\mathcal{G}_M$. 
Note that every~$\mathcal{G}_M$ contains all edges of~$G$. 
So we have a map~$\psi_{M'M}\colon \mathcal{G}_{M'}\to \mathcal{G}_M$, where~$\psi$ carries~$V'$ to~$W$ and every edge~$e$ to~$e$. 
We now show that the map~$\psi_{M'M}$ is continuous. 
Since~$M$ is subset of~$M'$, the partition of $\Phi(V(\mathcal{G}_{M'}))$ is a finer partition of $V(G)$ then $\Phi(V(\mathcal{G}_{M}))$.
In fact we contract it to smaller pieces and the contraction is a continuous map such that the new partitions are contained in the old partitions. 
More precisely, let~$K$ be a closed set in~$(\Vert\mathcal{G}_M\Vert,\tau_M)$. 
Since~$\phi_M$ is continuous,~$\phi_M^{-1}(K)$ is closed in~$(\Vert{G}\Vert\cup\Omega'(G), {\textit{\sc{FCtop}}})$ and it follows from Lemma \ref{closed} that~$\phi_{M'}(\phi_M^{-1}(K))$ is closed in~$(\Vert\mathcal{G}_{M'}\Vert, \tau_{M'})$. 
So it is enough to show that~$\psi_{M'M}\circ\phi_{M'}=\phi_M$. 
Since all maps are the identity on edges, it is enough to show equality for vertices and ends. 
By definition of~$\phi_{M'}$, every vertex~$v$ of~$G$ maps to the unique vertex~$U$ of~$\mathcal{G}_M$ and it follows from definition of~$\psi_{M'M}$ that the image of~$U$ by~$\psi_{M'M}$ is exactly the same as the image~$v$ by~$\phi_M$.  
Now let~$\omega$ be an end of~$G$. 
Then with regarding to the cuts of the set~$M'$, we can build up the unique word~$U$ which is a vertex of~$\mathcal{G}_{M'}$.\footnote{For building up the word, see the proof of Lemma \ref{contraction map}.} 
Again by the definition of~$\psi_{M'M}$ the image of~$U$ by~$\psi_{M'M}$ is equal to the image of~$\omega$ by~$\phi_M$. 
Hence~$\psi_{M'M}$ is continuous, as desired.
\end{proof}

\noindent As a consequence of  Lemma \ref{compact} and the previous lemma, we have the following theorem.

\begin{thm}\label{compatible}
 The system~$(\Vert\mathcal{G}_{M}\Vert,\psi_{MM'},\Gamma )$ is an inverse system, where~$\Gamma$ is the set of all finite subset of~$\mathcal{B}_{\rm{fin}}(G)$ and~$M,M'\in \Gamma$ and moreover the space~$\underleftarrow{\lim}\,\Vert\mathcal{G}_M\Vert$ is a compact Hausdorff space. 
\end{thm}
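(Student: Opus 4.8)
The plan is to verify the defining conditions of an inverse system for the family $(\Vert\mathcal{G}_M\Vert,\psi_{M'M},\Gamma)$ and then to deduce compactness and the Hausdorff property from results already in hand. First I would check that $\Gamma$, partially ordered by inclusion, is a directed poset: reflexivity, antisymmetry and transitivity of $\subseteq$ are clear, and given $M,M'\in\Gamma$ the union $M\cup M'$ is again a finite subset of $\mathcal{B}_{\rm{fin}}(G)$, hence lies in $\Gamma$ and bounds both $M$ and $M'$. The bonding maps are the continuous maps $\psi_{M'M}\colon(\Vert\mathcal{G}_{M'}\Vert,\tau_{M'})\to(\Vert\mathcal{G}_M\Vert,\tau_M)$ furnished by Lemma \ref{M to M'} for $M\subseteq M'$; they run from the larger index to the smaller one, as an inverse system over $(\Gamma,\subseteq)$ demands.

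The heart of the proof is the coherence of the $\psi_{M'M}$, and for this I would exploit the relation $\psi_{M'M}\circ\phi_{M'}=\phi_M$ established within the proof of Lemma \ref{M to M'}, together with surjectivity of each $\phi_M$ (Lemma \ref{contraction map}(i)). Taking $M=M'$ gives $\psi_{MM}\circ\phi_M=\phi_M$, and since $\phi_M$ is surjective this forces $\psi_{MM}=\mathrm{id}_{\Vert\mathcal{G}_M\Vert}$. For $M\subseteq M'\subseteq M''$ I would compute
\[
(\psi_{M'M}\circ\psi_{M''M'})\circ\phi_{M''}=\psi_{M'M}\circ(\psi_{M''M'}\circ\phi_{M''})=\psi_{M'M}\circ\phi_{M'}=\phi_M=\psi_{M''M}\circ\phi_{M''},
\]
each step being an instance of the relation from Lemma \ref{M to M'} for the relevant pair of cut-sets; surjectivity of $\phi_{M''}$ then yields $\psi_{M'M}\circ\psi_{M''M'}=\psi_{M''M}$, so the required triangles commute. (Alternatively this can be seen directly from the description of $\psi_{M'M}$ in Lemma \ref{M to M'}: on a vertex $V''$ of $\mathcal{G}_{M''}$ one has $\Phi(V'')\subseteq\Phi(\psi_{M''M'}(V''))\subseteq\Phi\bigl(\psi_{M'M}(\psi_{M''M'}(V''))\bigr)$, and uniqueness of the vertex of $\mathcal{G}_M$ whose associated subset of $V(G)$ contains $\Phi(V'')$ identifies this with $\psi_{M''M}(V'')$; ends are treated the same way via the unique-word construction in the proof of Lemma \ref{contraction map}, and all the maps are the identity on $E(G)\cup\mathring E(G)$.) Hence $(\Vert\mathcal{G}_M\Vert,\psi_{M'M},\Gamma)$ is an inverse system.

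Finally, each $(\Vert\mathcal{G}_M\Vert,\tau_M)$ is compact Hausdorff by Theorem \ref{G_M compact}, so Lemma \ref{compact} immediately gives that $\underleftarrow{\lim}\,\Vert\mathcal{G}_M\Vert$ is compact. Hausdorffness is not covered by Lemma \ref{compact}, so I would argue it separately: the inverse limit is, by construction, the subspace
\[
\Bigl\{(x_M)_{M\in\Gamma}\in\prod_{M\in\Gamma}\Vert\mathcal{G}_M\Vert\ \Big|\ \psi_{M'M}(x_{M'})=x_M\text{ whenever }M\subseteq M'\Bigr\}
\]
of the product $\prod_{M\in\Gamma}\Vert\mathcal{G}_M\Vert$, and since a product of Hausdorff spaces is Hausdorff and every subspace of a Hausdorff space is Hausdorff, $\underleftarrow{\lim}\,\Vert\mathcal{G}_M\Vert$ is compact Hausdorff, as claimed.

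I do not expect a genuine obstacle: the statement is essentially a bookkeeping consequence of Lemmas \ref{M to M'} and \ref{contraction map} and Theorem \ref{G_M compact}. The only points that need a little attention are the orientation of the ordering that makes $(\Gamma,\subseteq)$ into the index poset of an \emph{inverse} rather than a direct system, and the fact that the Hausdorff conclusion has to be obtained from the product description above rather than from Lemma \ref{compact}.
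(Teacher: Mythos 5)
Your proposal is correct, and it reaches the conclusion by a slightly different route than the paper. For the key point --- commutativity of the triangles $\psi_{M'M}\circ\psi_{M''M'}=\psi_{M''M}$ --- the paper argues directly on the partitions: since $M_1\subseteq M_2\subseteq M_3$ the partitions $\Phi(V(\mathcal{G}_{M_i}))$ successively refine one another, and contracting the finest partition to the coarsest in two stages or in one stage yields the same map. Your primary argument instead factors everything through the projections $\phi_M$: you use the identity $\psi_{M'M}\circ\phi_{M'}=\phi_M$ (proved inside Lemma \ref{M to M'}) together with surjectivity of $\phi_{M''}$ (Lemma \ref{contraction map}(i)) to cancel and obtain the cocycle condition, and you get $\psi_{MM}=\mathrm{id}$ the same way. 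This is cleaner and less reliant on the informal ``contract in stages'' picture; your parenthetical alternative is essentially the paper's argument. Two further points where your write-up is actually more complete than the paper's: you verify explicitly that $(\Gamma,\subseteq)$ is directed (via $M\cup M'$) and you note the orientation of the bonding maps, which the paper's indexing leaves muddled (in the proof it writes $\psi_{M_1M_3}=\psi_{M_2M_3}\circ\psi_{M_1M_2}$ for $M_1\subseteq M_2\subseteq M_3$, with subscripts that do not match the direction of the maps); and you observe that Lemma \ref{compact} as stated yields only compactness, so the Hausdorff property must be obtained separately from the realization of the inverse limit as a subspace of the product of the Hausdorff spaces $\Vert\mathcal{G}_M\Vert$ (Theorem \ref{G_M compact}) --- a step the paper omits when it says Lemma \ref{compact} ``completes the proof.''
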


\begin{proof}
In Lemma \ref{M to M'}, we show that~$\psi_{MM'}\colon\Vert\mathcal{G}_M\Vert\to\Vert\mathcal{G}_{M'}\Vert$ for~$M'\subseteq M$ is continuous.  Let~$M_1\subseteq M_2\subseteq M_3$ be three finite subsets of~$\mathcal{B}_{\rm{fin}}(G)$. We have to show that $\psi_{M_1M_3}=\psi_{M_2M_3}\circ \psi_{M_1M_2}$.
Note that every vertex of~$\mathcal{G}_{M_1}$ admits a partition of the set of vertices of~$G$. 
In fact we contract this component to this vertex. 
Since~$M_1\subseteq M_2\subseteq M_3$,  we can deduce that~$\Phi(V(\mathcal G_{M_i}))$ is a finer partition  than~~$\Phi(V(\mathcal G_{M_{i-1}}))$, for~$i=2,3$. 
Suppose that $\{V_1,\ldots,V_t\}=V(\mathcal{G}_{M_2})$. 
Let $\{V_{i_1},\ldots,V_{i_{\ell}}\}$ be all vertices of $\mathcal G_{M_2}$ in such a way that each $\Phi(V_{i_j})$ is contained in $\Phi(V)$ for a vertex $U\in V(\mathcal G_{M_1})$ for $j=1,\ldots,\ell$.
We now contract all vertices $\{V_{i_1},\ldots,V_{i_\ell}\}$ to obtain $U$.
With a similar method,  we are able to contract the finer partition corresponding~$\Phi(V(\mathcal G_{M_1}))$ to get the partition corresponding~$\Phi(V(\mathcal G_{M_2}))$ and again contract to get the partition corresponding~$\Phi(V(\mathcal G_{M_3}))$ or independently we can contract the partition corresponding~$\Phi(V(\mathcal G_{M_1}))$ to get~the partition corresponding~$\Phi(V(\mathcal G_{M_3}))$. 
This shows  that the above diagram is commutative. 
Now Lemma \ref{compact} completes the proof.
\end{proof}

\noindent Now we introduce the other family of inverse system. 
First, we define our auxiliary graphs.

\begin{deff}\label{contraction 1}
Let~$E\in \mathcal{P}(E(G))$ be a finite set. Then we remove~$E$ and we contract all vertices and edges of each component to a vertex.\footnote{So all ends which live on that component are corresponded to a vertex.} 
Now for every edge in~$E$, we join the corresponding vertices in the new graph.  We denote this new finite graph by~$\mathcal{G}.E$.
\end{deff}

\noindent Now we are ready to topologize~$\Vert\mathcal{G}.E\Vert$ for the auxiliary graph~$\mathcal{G}.E$. 
Every edge of~$\mathcal{G}.E$ is endowed by the topology of the closed unit interval~$[0,1]$.
The topology on~$\Vert\mathcal{G}.E\Vert$ is the coherent topology with all edges which is exactly the same as one complex topology here. 
Now suppose that~$E'\subseteq E$ are two finite subsets of~$E(G)$.\\
\noindent The definition of~$\mathcal{G}.E$ leads to a map~$f_{EE'}\colon\Vert\mathcal{G}.E\Vert\to \Vert\mathcal{G}.E'\Vert$. Every vertex of~$\mathcal{G}.E$ corresponds a component
of~$G\setminus E$. Thus this component is contained a component of~$G\setminus E'$ and so it defines~$f_{EE'}$ on vertices of the graph~$\mathcal{G}.E$. 
So we only need to define~$f_{EE'}$ on~$E\setminus E'$. 
Each of  these edges has to be a component of~$G\setminus E'$ and so~$f_{EE'}$ carries this edge to the corresponding vertex of its component.\\ 

\noindent Note that each~$\Vert\mathcal{G}.E\Vert$ is a compact Hausdorff space and it is not hard to see that~$f_{E'E}$ is continuous. An analogous argument of Theorem \ref{compatible} yields the following theorem.

\begin{thm}\label{lim G.m}
The system~$(\Vert\mathcal{G}.{E}\Vert,f_{EE'},\Gamma )$ is an inverse system, where~$\Gamma$ is the set of all finite sets of edges and~$E,E'\in \Gamma$ and moreover the space~$\underleftarrow{\lim}\,\Vert\mathcal{G}.E\Vert$ is compact. 
\end{thm}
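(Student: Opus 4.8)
The plan is to mimic the structure of the proof of Theorem \ref{compatible}, since the two families of auxiliary graphs are built in a parallel fashion; the only genuine differences are that here the index poset consists of finite \emph{edge} sets rather than finite sets of cuts, and that the maps $f_{EE'}$ are defined directly on the combinatorial level rather than being factored through $\phi_M$. First I would verify that $(\Gamma,\subseteq)$, the set of all finite subsets of $E(G)$ ordered by inclusion, is a directed poset: reflexivity, antisymmetry and transitivity are clear, and given $E_1,E_2\in\Gamma$ the set $E_1\cup E_2$ is again a finite edge set with $E_1,E_2\subseteq E_1\cup E_2$, so directedness holds. I would also record, as in Definition \ref{contraction 1}, that for each finite $E$ the graph $\mathcal G.E$ has finitely many vertices (since $G\setminus E$ has only finitely many components — the $2|E|$ endvertices of edges of $E$ meet at most $2|E|$ components, and every component of $G\setminus E$ that is not one of these must equal all of $G$ minus nothing, so in fact there are at most $2|E|+1$ components) and finitely many edges, hence $\Vert\mathcal G.E\Vert$ with the one-complex topology is a finite CW-complex, thus compact Hausdorff; this is the input needed for Lemma \ref{compact}.

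Next I would check that the maps $f_{EE'}\colon\Vert\mathcal G.E\Vert\to\Vert\mathcal G.E'\Vert$ for $E'\subseteq E$ are well defined and continuous. Well-definedness on vertices: a vertex of $\mathcal G.E$ is (the contraction of) a component $C$ of $G\setminus E$; since $E'\subseteq E$ we have $G\setminus E'\supseteq G\setminus E$, so $C$ lies in a unique component $C'$ of $G\setminus E'$, and we send the vertex $[C]$ to the vertex $[C']$. On the edges of $\mathcal G.E$: an edge of $\mathcal G.E$ is an edge $e\in E$, and either $e\in E'$, in which case it is still an edge of $\mathcal G.E'$ and we map it identically, or $e\in E\setminus E'$, in which case $e$ lies inside some component of $G\setminus E'$ and is collapsed to the corresponding vertex of $\mathcal G.E'$. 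On each closed edge this is either the identity homeomorphism onto a closed edge or the constant map to a point, hence continuous; since $\Vert\mathcal G.E\Vert$ carries the coherent (one-complex) topology with respect to its finitely many closed edges, a map out of it is continuous iff its restriction to each closed edge is continuous, so $f_{EE'}$ is continuous. Moreover $f_{EE}=\mathrm{id}$ is immediate from the construction.

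Then I would verify the compatibility condition $f_{E_1E_3}=f_{E_2E_3}\circ f_{E_1E_2}$ whenever $E_3\subseteq E_2\subseteq E_1$, which makes $(\Vert\mathcal G.E\Vert,f_{EE'},\Gamma)$ an inverse system. Both composites are the identity on those edges of $\mathcal G.{E_1}$ that survive into $\mathcal G.{E_3}$ (i.e.\ edges of $E_3$), and on a vertex $[C]$ with $C$ a component of $G\setminus E_1$, both composites produce the unique component of $G\setminus E_3$ containing $C$, because the component of $G\setminus E_2$ containing $C$ is itself contained in the component of $G\setminus E_3$ containing $C$ (nested since $E_3\subseteq E_2\subseteq E_1$); an edge $e\in E_1\setminus E_3$ is collapsed by both routes to the vertex of $\mathcal G.{E_3}$ given by the component of $G\setminus E_3$ that contains $e$. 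So the triangles of Figure 2.2 commute, exactly as in the argument of Theorem \ref{compatible}. Finally, having established that $(\Vert\mathcal G.E\Vert,f_{EE'},\Gamma)$ is an inverse system of compact Hausdorff spaces, Lemma \ref{compact} gives that $\underleftarrow{\lim}\,\Vert\mathcal G.E\Vert$ is compact, completing the proof. The only point requiring a little care — the ``main obstacle,'' such as it is — is the bookkeeping that $G\setminus E$ really has only finitely many components so that each $\mathcal G.E$ is genuinely finite; everything else is a routine transcription of the proof of Theorem \ref{compatible}.
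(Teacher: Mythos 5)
Your proposal is correct and follows essentially the same route the paper takes: the paper merely notes that each $\Vert\mathcal{G}.E\Vert$ is compact Hausdorff, that the $f_{EE'}$ are continuous, and that commutativity follows by the same argument as Theorem \ref{compatible}, then invokes Lemma \ref{compact}. You simply fill in the routine details (directedness of $\Gamma$, finiteness of $\mathcal{G}.E$ via connectedness of $G$, continuity checked edgewise in the coherent topology, and the nesting of components giving commutativity), all of which are sound.
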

 

\section{Reconstruction of Topologies}
\noindent In this section, we study connections between the following topological spaces: ($\Vert{G}\Vert\cup\Omega'(G)$, {\sc{Etop}}),   ($\Vert{G}\Vert\cup\Omega'(G),$ {\sc{FCtop}}), ($\widetilde{G}$, {\sc{IFCtop}}), ($\widetilde{G}$, {\sc{Itop}}),~$\underleftarrow{\lim}\,\Vert\mathcal{G}.E\Vert$ and~$\underleftarrow{\lim}\,\Vert\mathcal{G}_M\Vert$. 
In particular we show that the four last topological spaces are homeomorphic.

\noindent We start with the following theorem.
\begin{thm}\label{FCTop=ETop}
Let~$G$ be an arbitrary graph. Then topologies {\sc{FCtop}} and {\sc{Etop}} coincide.
\end{thm}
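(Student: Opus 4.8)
The plan is to use the fact that both {\sc{FCtop}} and {\sc{Etop}} live on the same ground set $\Vert G\Vert\cup\Omega'(G)$ and to show that each refines the other. Since {\sc{FCtop}} is presented by basic \emph{closed} sets and {\sc{Etop}} by basic \emph{open} sets, this reduces to two checks: (i) every basic {\sc{FCtop}}-closed set is {\sc{Etop}}-closed, and (ii) every basic {\sc{Etop}}-open set is {\sc{FCtop}}-open. On the interior $\mathring e$ of each edge both topologies restrict to the usual interval topology, so a closed subinterval of an edge is {\sc{Etop}}-closed and an open subinterval is open in both; I will use these facts freely, so that the genuine content of (i) and (ii) concerns neighbourhoods of vertices and of ends.

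For (i): a basic {\sc{FCtop}}-closed set coming from a finite cut $C=(A,B)$ is (apart from the edge-subinterval case) a set $D=\Vert G[A_0]\Vert\cup\{\omega\mid\omega\text{ lives on }A_0\}$ with $A_0$ a component of $G[A]$, and symmetrically for $B$. I would write the complement of $D$ explicitly as the union of $\mathring C$ with the analogous sets attached to the remaining components of $G\setminus\mathring C$. For each such other component $D'$, delete from every cut-edge incident to $D'$ one interior point close to the $D'$-endpoint; since $A_0'$ (resp.\ the relevant component of $G[B]$) is a connected component of $G[A]$ (resp.\ $G[B]$), this choice isolates $D'$ and yields a basic {\sc{Etop}}-open set $\mathcal O_{D'}$ whose ``stub'' part lies in $\mathring C$, so $\mathcal O_{D'}$ is contained in the complement of $D$. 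The union of all the $\mathcal O_{D'}$ together with the finitely many open intervals $\mathring e$, $e\in C$, is exactly the complement of $D$; being a union of {\sc{Etop}}-open sets it is {\sc{Etop}}-open, so $D$ is {\sc{Etop}}-closed.

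For (ii): a basic {\sc{Etop}}-open set has the form $\mathcal O=\mathcal C\cup\{\omega\mid\omega\text{ lives on }\mathcal C\}\cup L$, where $\mathcal C$ is a component of $\Vert G\Vert\setminus X$, $X$ a finite set of interior points of a finite edge set $F$, and $L$ the associated half-open stubs; the degenerate case in which $\mathcal C$ is an open subinterval of an edge is immediate, so I assume $\mathcal C$ contains a vertex. As $L\subseteq\mathcal C$, the complement of $\mathcal O$ is $X$ together with the remaining components $\mathcal C_i$ of $\Vert G\Vert\setminus X$ and the ends living on them. The key point is that $V_i:=V(G)\cap\mathcal C_i$ determines a \emph{finite} cut $(V_i,V(G)\setminus V_i)$, because any edge leaving $V_i$ must carry a point of $X$ and hence belong to $F$; moreover $G[V_i]$ is connected (two vertices of $\mathcal C_i$ are joined by a path in $\mathcal C_i$, which can only use full edges outside $F$, all of them inside $V_i$), so $\Vert G[V_i]\Vert$ together with the ends living on it is precisely the basic {\sc{FCtop}}-closed set attached to that cut. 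Since a connected graph minus $|F|$ edges has at most $|F|+1$ components, there are only finitely many such $\mathcal C_i$; a short edge-by-edge check over the edges of $F$ then shows that the complement of $\mathcal O$ is the union of these finitely many basic {\sc{FCtop}}-closed sets with one closed subinterval of each edge of $F$, hence {\sc{FCtop}}-closed, so $\mathcal O$ is {\sc{FCtop}}-open. Combining (i) and (ii) gives {\sc{FCtop}}$=${\sc{Etop}}.

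I expect the main obstacle to be the bookkeeping in step (ii): one must verify that on each edge of $F$ the leftover fragments in the complement of $\mathcal O$ — the deleted points of $X$, the non-special stubs, and any ``floating'' open subinterval lying strictly between consecutive points of $X$ — really do glue into a closed subinterval of that edge, and that the vertex-carrying components $\mathcal C_i$ correspond exactly to the finite cuts $(V_i,V(G)\setminus V_i)$ with $\Vert G[V_i]\Vert$ connected. The analogous matching of half-open stubs at cut-edge endpoints in step (i) needs similar care, but is easier: there one only needs a \emph{union} of open sets, so no finiteness or component-counting is required.
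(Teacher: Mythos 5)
Your proposal is correct and follows essentially the same route as the paper's proof: each direction converts a basic set of one topology into the other by observing that the edges leaving a topological component through the deleted inner points (resp.\ the cut edges) form a finite cut (resp.\ admit a choice of inner points), and then takes complements. Your version is somewhat more explicit about the leftover closed edge-fragments and the degenerate edge-interval cases, which the paper glosses over, but the underlying argument is the same.
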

\begin{proof}
In order to show that they coincide, we have to prove that every basic closed set in {\sc{Etop}} is closed in {\sc{FCtop}} and vice versa.
First let~${\widetilde{G}\setminus \mathcal{O}_X(\omega)}$ be a basic closed set in {\sc{Etop}}, where~$\mathcal{O}_X(\omega)=F\cup\{\omega\in\Omega'(G)\mid \omega\text{ lives on }C\}\cup F'$ and~$X=\{x_1,\ldots,x_n\}$ are inner points of~$\{e_1,\ldots,e_n\}$ of edges such that~$x_i\in \mathring{e}_i$ for~$i=1,\ldots,n$ and $F$ is a component of $\widetilde{G}\setminus\{e_1,\ldots,e_n\}$ and $F'$ is a finite set of partial edges as we defined in Section 2.2.  
So we can suppose that~$\widetilde{G}\setminus \mathcal{O}_X(\omega)=F_1\cup\cdots\cup F_t$, where each~$F_i$ is the topological components of~$\widetilde{G}\setminus X$ except~$\mathcal{O}_X(\omega)$ and the corresponding inner points of~$X$. 
Without loss of generality, we can assume~$\{x_1,\ldots,x_t\}$ are inner points 
separating~$F_i$'s from~$\mathcal{O}_X(\omega)$. 
Now consider the edges containing  inner points 
$\{x_1,\ldots,x_r\}$, say~$C=\{e_1,\ldots,e_r\}$. 
Hence~$C$ forms a finite cut 
and each~$F_i$ is a topological component, for~$i=1,\ldots,t$. Thus every~$F_i$ is 
a closed set in~{\sc{FCtop}}, as desired.\\
Now suppose that~$C=(A,B)$ is an arbitrary finite cut and~$F_1,\ldots,F_s$ are components after removing~$\mathring{C}$. 
For a given~${\omega\in\Omega'(G)}$, let~$F_1$ be the component which contains a tail of a ray of~$\omega$ and let~$\mathcal{C}_C(\omega)$ be a basic closed set around~$\omega$. 
Assume that~$X=\{x_1,\ldots,x_s\}$ are arbitrary inner points of edges of~$C$ and let~$\mathcal{O}_X(\omega)$ be the corresponding basic open set containing~$F_1$. 
On the other hand, the union of all the other components of~$G\setminus X$ except~$\mathcal{O}_X(\omega)$ is an open set in {\sc{Etop}}. Thus the union of~$\left(G\setminus \mathcal{O}_X(\omega)\right)$ with~$\mathring{C}$ is an open set in {\sc{Etop}}. 
Therefore~$\mathcal{C}_C(\omega)$ is closed in {\sc{Etop}}, as it is the  complement of~$\left(G\setminus \mathcal{O}_X(\omega)\right)\cup \mathring{C}$.
\end{proof}

\noindent Now we have the following lemma.

\begin{lem} {\rm{\cite[Satz 2.1]{Schulz}}}
Let~$G$ be an infinite graph. Then~($\Vert{G}\Vert\cup\Omega'(G)$, {\sc{Itop}}) is a compact Hausdorff space.
\end{lem}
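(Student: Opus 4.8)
The plan is to deduce this statement as a corollary of the work already done in the paper, rather than re-proving Schulz's result from scratch. By Theorem \ref{FCTop=ETop}, the space $(\Vert G\Vert\cup\Omega'(G),\textnormal{\sc Etop})$ is homeomorphic to $(\Vert G\Vert\cup\Omega'(G),\textnormal{\sc FCtop})$, since {\sc Etop} and {\sc FCtop} coincide. Now {\sc Itop} is obtained from {\sc Etop} by identifying points with the same open neighborhoods and passing to the quotient, and {\sc IFCtop} is obtained from {\sc FCtop} by exactly the same construction (identifying points not separable by a finite cut, which by the equivalence of the two topologies is the same equivalence relation). Hence $(\widetilde G,\textnormal{\sc Itop})$ and $(\widetilde G,\textnormal{\sc IFCtop})$ are homeomorphic, so it suffices to establish the statement for {\sc IFCtop}.

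For compactness I would argue as follows. First reduce to the countable case if desired, but in fact Theorem \ref{fctopcompact} (via the star--comb lemma argument) gives compactness of $(G,\textnormal{\sc FCtop})$ for countable $G$; for the general statement one invokes the same star--comb argument, which does not actually use countability in an essential way once one phrases the covering condition correctly, or alternatively one uses the inverse limit description: by Theorem \ref{compatible} the space $\underleftarrow{\lim}\,\Vert\mathcal G_M\Vert$ is compact Hausdorff, and one checks (this is done in the next section of the paper, which we may anticipate) that it is homeomorphic to $(\widetilde G,\textnormal{\sc IFCtop})$. Either route yields compactness. Then, since a continuous image of a compact space is compact and the quotient map $\Vert G\Vert\cup\Omega'(G)\to\widetilde G$ is continuous and surjective, $(\widetilde G,\textnormal{\sc IFCtop})$ is compact.

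For the Hausdorff property, the key point is that the identification was designed precisely to make the space Hausdorff: two distinct points of $\widetilde G$ are, by construction of the equivalence relation, separated by some finite cut $C$, and the two components of $G\setminus\mathring C$ containing them (together with the ends living on each side and the appropriate half-open partial edges) descend to disjoint open sets of $\widetilde G$ around the two points. One must check that these sets are genuinely open in the quotient topology, i.e. that their preimages are open and saturated; saturation follows because the equivalence classes are not split by any finite cut, so a class lies entirely on one side of $C$. This verification is routine but is the one place where a little care is needed.

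The main obstacle, then, is not any single deep step but rather making sure the two reductions are airtight: (i) that the equivalence relation defining {\sc Itop} from {\sc Etop} and the one defining {\sc IFCtop} from {\sc FCtop} really do coincide (this is immediate from Theorem \ref{FCTop=ETop}, since ``not separable by a finite subset of edges'' and ``not separable by a finite cut'' define the same relation once the topologies agree), and (ii) that compactness transfers through the quotient and the homeomorphism without loss. Granting Theorem \ref{FCTop=ETop} and Theorem \ref{fctopcompact}, the proof is short. I would write it in essentially three lines: {\sc Etop}${}={}${\sc FCtop} by Theorem \ref{FCTop=ETop}, hence {\sc Itop}${}={}${\sc IFCtop}; compactness follows from Theorem \ref{fctopcompact} together with continuity of the quotient map; and the Hausdorff property follows from the definition of the identification together with the disjoint-neighborhood construction above.
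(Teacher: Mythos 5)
The paper does not actually prove this lemma; it imports it from Schulz's Satz~2.1 as a black box. Your attempt to re-derive it from the paper's internal machinery is a reasonable idea, and the Hausdorff half and the reduction {\sc Itop}${}={}${\sc IFCtop} via Theorem~\ref{FCTop=ETop} are fine (as is the observation that compactness passes to quotients along the continuous surjection $\pi$). The genuine gap is in the compactness half for \emph{uncountable} $G$, and neither of your two proposed routes closes it. First, Theorem~\ref{fctopcompact} is stated only for countable $G$, and its proof uses countability essentially: it enumerates the basic closed sets as $\{C_i\}_{i\in\N\cup\{0\}}$ and diagonally selects points $x_i\in C_0\cap\cdots\cap C_i$. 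For an uncountable graph there are uncountably many finite cuts, hence uncountably many basic closed sets, and an arbitrary family with the finite intersection property cannot be exhausted by an $\omega$-indexed diagonal selection; your claim that countability ``is not used in an essential way'' would itself need a new argument (e.g.\ via ultrafilters or Alexander's subbase lemma), which you do not supply.

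Second, the fallback via the inverse limit is circular in the wrong direction. Theorem~\ref{compatible} does give that $\underleftarrow{\lim}\,\Vert\mathcal{G}_M\Vert$ is compact Hausdorff, but the homeomorphism $\widetilde{G}\cong\underleftarrow{\lim}\,\Vert\mathcal{G}_M\Vert$ in Theorem~\ref{IFCTop=Itop} is obtained by exhibiting a continuous bijection $h\colon\widetilde{G}\to\underleftarrow{\lim}\,\Vert\mathcal{G}_M\Vert$ and then invoking the \emph{compactness of} $\widetilde{G}$ (via Theorem~\ref{fctopcompact}) to conclude that $h$ is a homeomorphism. Compactness of the codomain of a continuous bijection does not transfer back to the domain (consider $[0,1)\to S^1$), so you cannot use the inverse limit to establish compactness of $\widetilde{G}$ without already having it. If you want the lemma in the stated generality, the honest options are to cite Schulz as the paper does, or to give a genuinely new compactness proof for {\sc FCtop} that handles arbitrary families of basic closed sets.
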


\noindent In Theorem \ref{fctopinverselimit}, we proved that~${\phi_M\colon (\Vert{G}\Vert\cup\Omega'(G),{\textit{\sc{FCtop}}})\to(\Vert\mathcal{G}_M\Vert, \tau_M)}$ is continuous. 
And since ${(\widetilde{G},{\textit{\sc{IFCtop}}})}$ is a quotient space of $(\Vert{G}\Vert\cup\Omega'(G),\textit{\sc{FCtop}})$, there is a continuous map from ${(\widetilde{G},{\textit{\sc{IFCtop}}})}$ to $(\Vert\mathcal{G}_M\Vert, \tau_M)$.
For the sake of simplicity, we call this map also $\phi_M$.

\begin{thm}\label{IFCTop=Itop}
Let~$G$  be an arbitrary infinite graph. Then the following topological spaces are homeomorphic.
\begin{enumerate}[\rm (i)]
\item~$(\widetilde{G},{\textit{\sc{Itop}}})$
\item~$(\widetilde{G},{\textit{\sc{IFCtop}}})$
\item~$\underleftarrow{\lim}\,\Vert\mathcal{G}_{M}\Vert$
\item~$\underleftarrow{\lim}\,\Vert\mathcal{G}.{E}\Vert$
\end{enumerate}
\end{thm}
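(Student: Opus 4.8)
The plan is to establish the four homeomorphisms by proving a cycle of implications, using the fact that a continuous bijection from a compact space to a Hausdorff space is automatically a homeomorphism. First I would dispose of the easiest edge: $(\widetilde{G},\textsc{Itop})$ and $(\widetilde{G},\textsc{IFCtop})$ have the same underlying set (both are the quotient of $\Vert G\Vert\cup\Omega'(G)$ by the ``no finite cut separates'' relation, which by Theorem \ref{FCTop=ETop} equals the ``no finite edge set separates'' relation), and since $\textsc{FCtop}=\textsc{Etop}$ by Theorem \ref{FCTop=ETop}, the quotient topologies coincide; hence (i) and (ii) are literally the same space. This reduces the problem to showing (ii), (iii) and (iv) are homeomorphic.

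Next I would construct the homeomorphism between $(\widetilde{G},\textsc{IFCtop})$ and $\underleftarrow{\lim}\,\Vert\mathcal{G}_M\Vert$. The maps $\phi_M\colon(\widetilde{G},\textsc{IFCtop})\to(\Vert\mathcal{G}_M\Vert,\tau_M)$ are continuous (Theorem \ref{fctopinverselimit} together with the quotient remark preceding this theorem), and they are compatible with the bonding maps $\psi_{M'M}$ since $\psi_{M'M}\circ\phi_{M'}=\phi_M$ was verified inside the proof of Lemma \ref{M to M'}. By the universal property of the inverse limit there is a unique continuous map $\Phi\colon(\widetilde{G},\textsc{IFCtop})\to\underleftarrow{\lim}\,\Vert\mathcal{G}_M\Vert$. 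I would then check $\Phi$ is a bijection: surjectivity follows because each $\phi_M$ is surjective (Lemma \ref{contraction map}(i)) and the $\mathcal{G}_M$ form a cofinal-enough system that any compatible thread is realised; injectivity is the crucial point — if $x\not\sim y$ in $\widetilde G$, then some finite cut $C$ separates them, so for $M=\{C\}$ we get $\phi_M(x)\neq\phi_M(y)$, whence the threads differ. Since $\widetilde G$ with \textsc{IFCtop}\ is compact (Theorem \ref{fctopcompact} gives compactness of $(\Vert G\Vert\cup\Omega'(G),\textsc{FCtop})$, and quotients of compact spaces are compact) and $\underleftarrow{\lim}\,\Vert\mathcal{G}_M\Vert$ is Hausdorff (Theorem \ref{compatible}), $\Phi$ is a homeomorphism. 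The same scheme, using the maps from $\Vert\mathcal{G}.E\Vert$ and the bonding maps $f_{EE'}$ of Theorem \ref{lim G.m}, gives $\underleftarrow{\lim}\,\Vert\mathcal{G}.E\Vert\cong(\widetilde{G},\textsc{IFCtop})$ — here one first notes a natural continuous surjection $\Vert G\Vert\cup\Omega'(G)\to\Vert\mathcal{G}.E\Vert$ which factors through $\widetilde G$, and separation of non-equivalent points by some finite edge set $E$ again yields injectivity of the induced map into the limit.

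Alternatively, and perhaps more cleanly, I would relate the two inverse systems directly. Every $\mathcal{G}_M$ for $M=\{C_1,\dots,C_t\}$ with $C_i=(A_i,B_i)$ is obtained from the finite edge set $E=C_1\cup\dots\cup C_t$ by further identifications, giving natural continuous maps $\Vert\mathcal{G}.E\Vert\to\Vert\mathcal{G}_M\Vert$; conversely the cut $\partial(\text{component})$ data lets one go back. Because the index set of all finite edge sets and the index set of all finite families of finite cuts are mutually cofinal under these comparisons, the two inverse systems are pro-isomorphic, hence have homeomorphic limits. This is essentially bookkeeping once the maps are named.

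The main obstacle I expect is the injectivity/bijectivity verification for the map into $\underleftarrow{\lim}\,\Vert\mathcal{G}_M\Vert$ — specifically, showing that a compatible thread $(U_M)_M$ in the limit actually comes from a single point of $\widetilde G$, which requires a compactness argument: the sets $\phi_M^{-1}(U_M)$ are closed (by Lemma \ref{closed}, the $\phi_M$ are closed maps, and one checks the preimages are closed), they are nonempty, and they have the finite intersection property because the index set is directed, so by compactness of $(\Vert G\Vert\cup\Omega'(G),\textsc{FCtop})$ (Theorem \ref{fctopcompact}) their intersection is nonempty, and any point of it maps to the given thread. That its image in $\widetilde G$ is the unique preimage is exactly the separation-by-finite-cuts property. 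Once this is in hand, the rest is an application of the ``continuous bijection from compact to Hausdorff'' principle, invoking the compactness statements already proved in Theorems \ref{fctopcompact}, \ref{compatible} and \ref{lim G.m}.
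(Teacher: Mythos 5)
Your proposal is correct and follows essentially the same route as the paper: identify (i) and (ii) via Theorem \ref{FCTop=ETop}, use the universal property of the inverse limit together with surjectivity of the $\phi_M$ and separation of inequivalent points by a finite cut to get a continuous bijection onto $\underleftarrow{\lim}\,\Vert\mathcal{G}_M\Vert$, and conclude with the compact-to-Hausdorff principle; the comparison with $\underleftarrow{\lim}\,\Vert\mathcal{G}.E\Vert$ is likewise handled by the cofinality of the two index systems, just as in the paper's map $\theta$. Your explicit finite-intersection-property argument for realizing a thread is a welcome expansion of the paper's bare citation of \cite[Corollary 1.1.6]{profinite group}, but it is not a different method.
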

\begin{proof}
 We prove it according to the following diagram:
\begin{center}
${\rm (i)}\Longleftrightarrow {\rm (ii)}\Longleftrightarrow {\rm (iii)}\Longleftrightarrow {\rm (iv)}~$
\end{center}
We show that topologies {\sc{Itop}} and~{\sc{IFCtop}} are equivalent on~$\widetilde{G}$.
We consider quotient maps~$\pi_1\colon(\Vert{G}\Vert\cup\Omega'(G),{\textit{\sc{Etop}}})\to (\widetilde{G},{\textit{\sc{Itop}}})$  and~$\pi_2\colon(\Vert{G}\Vert\cup\Omega'(G),{\textit{\sc{FCtop}}})\to (\widetilde{G},{\textit{\sc{IFCtop}}})$ and so Theorem \ref{FCTop=ETop} completes this part.\\
\noindent Now we show that~${\rm (ii)}\Longleftrightarrow {\rm (iii)}$. 
We use the universal property of the inverse limit of topological spaces. 
Let~$M_1$ and~$M_2$ be two finite subsets of~$\mathcal{B}_{\rm{fin}}(G)$ such that~$M_2\subseteq M_1$. 
Note that it follows from Lemma \ref{contraction map} and Theorem \ref{fctopinverselimit} that we have the continuous surjective maps~$\phi_{M_1}$ and~$\phi_{M_2}$  from~$\widetilde{G}$ to~$\mathcal{G}_{M_1}$ and~$\mathcal{G}_{M_2}$ respectively, as in the following commutative digram.
\begin{center}
\begin{tikzpicture}
\matrix (m) [matrix of math nodes,row sep=3em,column sep=2em,minimum width=2em] {
& \widetilde{G} & \\
&\underleftarrow{\lim}\,\Vert\mathcal{G}_{M}\Vert&\\
\Vert\mathcal{G}_{M_1}\Vert&  & ,\Vert\mathcal{G}_{M_2}\Vert \\};
\path[-stealth]
(m-1-2) edge node [left] {$h$} (m-2-2)
(m-2-2) edge (m-3-1)
(m-2-2) edge (m-3-3)
(m-1-2) edge [bend right=330]node [right] {$\phi_{M_2}$} (m-3-3)
(m-1-2) edge [bend left=330] node [left] {$\phi_{M_1}$} (m-3-1)
(m-3-1) edge node [above] {$\psi_{M_1M_2}$} (m-3-3);
\end{tikzpicture}
\end{center}
\begin{center}
Figure 4.1
\end{center}

\noindent By the universal property of the inverse limit, there is a  unique continuous map~${h\colon \widetilde{G}\to \underleftarrow{\lim}\,\Vert\mathcal{G}_{M}}\Vert$ and moreover since each~$\phi_M$ is surjective  for finite subset~$M\subseteq \mathcal{B}_{\rm{fin}}(G)$, it follows from~\cite[Corollary 1.1.6]{profinite group} that the map~$h$ is surjective. 
Next we show that~$h$ is injective as well. 
Assume that there are two distinct points~$x$ and~$y$ belonging to~$\widetilde{G}$ such that~$h([x])=h([y])$. 
There is a finite subset~$M\subseteq \mathcal{B}_{\rm{fin}}(G)$ such that the images~$x$ and~$y$ in~$\mathcal {G}_M$ are distinct and we deduce that the images~$[x]$ and~$[y]$ by~$h$ are different, as the above diagram is commutative. 
Note that Theorem \ref{fctopcompact} implies that~$\widetilde{G}$ with~{\sc{FCtop}} is a compact space and any continuous bijection from a compact space to a Hausdorff space is a homeomorphism map. 
So~$\widetilde{G}$ is homeomorphic to~$\underleftarrow{\lim}\,\Vert\mathcal{G}_{M}\Vert$, as desired.   

\noindent In order to show~${\rm (iii)}\Longleftrightarrow {\rm (iv)}$, we find a continuous bijection map between spaces~$\underleftarrow{\lim}\,\Vert\mathcal{G}_{M}\Vert$ and~$\underleftarrow{\lim}\,\Vert\mathcal{G}.{M}\Vert$. 
Since this map is a continuous map from a compact space to a Hausdorff space, this would complete the prove. 
Suppose that~$\Sigma$ and~$\Gamma$ are sets of finite subsets of~$E(G)$ and~$\mathcal{B}_{\rm{fin}}(G)$, respectively. Let~$E\in \Sigma$. 
Then assume that~$M$ is the set~$\{C\in\mathcal{B}_{\rm{fin}}(G)\mid C\subseteq E\}$. Let~$V$ be a vertex of~$\mathcal{G}_M$. 
Then every vertex of~$G$ in~$\Phi(V)$ belongs to a component of~$G\setminus E$.  
So we have a map~$g\colon \mathcal{G}_M\to\mathcal{G}.E$ where~$g$ carries each vertex~$V$ to the contraction of the corresponding component and~$g$ contracts every edge of~$\mathcal{G}_M$ out side of~$M$ to the vertex of~$\mathcal G.M$ corresponding with a suitable component of~$G\setminus E$. 
It is not hard to see that the extension of~$g\colon \Vert\mathcal{G}_M\Vert\to\Vert\mathcal{G}.E\Vert$ is a continuous map. 
Similarly, we define~$\Vert\mathcal{G}_{\widetilde{M}}\Vert$ and~$\tilde{g}$ for~$\widetilde{E}\subseteq E$. 
Thus we have the following commutative diagram:\\
\begin{center}
\begin{tikzpicture}
\matrix (m) [matrix of math nodes,row sep=3em,column sep=4em,minimum width=2em] {
\Vert\mathcal{G}_M\Vert & \Vert\mathcal{G}.E\Vert \\
\Vert\mathcal{G}_{\widetilde{M}}\Vert & \Vert\mathcal{G}.\widetilde{E}\Vert \\};
\path[-stealth]
(m-1-1) edge node [left] {$\psi_{M\widetilde{M}}$} (m-2-1)
edge  node [below] {$g$} (m-1-2)
(m-2-1.east|-m-2-2) edge node [below] {$\widetilde{g}$} node [above] {} (m-2-2)
(m-1-2) edge node [right] {$f_{E\widetilde{E}}$} (m-2-2);
\end{tikzpicture}
\end{center}
\begin{center}
Figure 4.2
\end{center}
\noindent Each~$g\colon \Vert\mathcal{G}_M\Vert\to\Vert\mathcal{G}.E\Vert$ induces a compatible continuous surjective map from $\underleftarrow{\lim}\,\Vert\mathcal{G}_{M}\Vert$ to~$\Vert\mathcal{G}.E\Vert$. 
It follows from \cite[Corollary 1.1.6]{profinite group} that the corresponding map which induces the mapping~$\theta\colon \underleftarrow{\lim}\,\Vert\mathcal{G}_{M}\Vert\to \underleftarrow{\lim}\,\Vert\mathcal{G}.{M}\Vert$ is a surjective continuous map. 
Now we show that~$\theta$ is  injective. 
Assume that~${x=(x_\alpha)_{\alpha\in I}}$ and~${y=(y_\alpha)_{\alpha\in I}}$ are distinct in~$\underleftarrow{\lim}\,\Vert\mathcal{G}_{M}\Vert$ such that~${\theta((x_\alpha)_{\alpha\in I})=\theta((y_\alpha)_{\alpha\in I})}$.  
Since~${x}$ and~$y$ are different, there is~$\alpha_0\in I$ such that~$x_{\alpha_0}$ and~$y_{\alpha_0}$ are different points in~$\Vert\mathcal{G}_{M_{\alpha_0}}\Vert$. 
Then there is a finite cut~$C$ in~$M_{\alpha_0}$ which separates~$x$ and~$y$. 
Thus the images~$x$ and~$y$ in~$\Vert\mathcal{G}.C\Vert$ are different and so~$\theta(x)\neq\theta(y)$. 
The other cases are similar. 
Thus we found a continuous bijection between~$\underleftarrow{\lim}\,\Vert\mathcal{G}_{M}\Vert$ and~$\underleftarrow{\lim}\,\Vert\mathcal{G}.{M}\Vert$.~\end{proof}

\section{Topological Spanning Trees in \sc{Itop}}
The aim of this section is to show how the auxiliaries graphs defined in the third section can be utilized to investigate topological spanning trees in~$(\widetilde{G},{\textit{\sc{Itop}}})$.
We first review some notations and definitions regarding topological spanning trees in~$(\widetilde{G},{\textit{\sc{Itop}}})$.

\noindent An \textit{arc} and a \textit{circle} in the space~$(\widetilde{G},{\textit{\sc{Itop}}})$ is a subspace homeomorphic to the closed interval~$[0,1]$ and the unit circle~$S^1$, respectively. 
A subspace~$H$ of~$\widetilde{G}$ is said a \textit{standard subspace} if it is the closure of some subgraph of~$G$.
\begin{deff}
A \textit{topological spanning tree} of~$\widetilde{G}$ is an arc-connected standard subspace~$T$ of~$\widetilde{G}$ that contains every vertex  of~$\widetilde{G}$ but contains no circle.
\end{deff}
\noindent We note that {\sc{Itop}} is obtained by taking quotient of {\sc Etop} and so {\sc{Itop}} is compact. since a topological spanning tree contains the class of every vertex of~$\widetilde{G}$, it should have every end as well.\\

\noindent We now need another terminology. 
A \textit{continuum} is a compact connected Hausdorff space.
\begin{lem}\label{arc=connected}{\rm\cite[Problem 6.3.11]{engelking}}
Every locally connected  metric continuum is arc-connected. 
\end{lem}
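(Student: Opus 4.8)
The statement is the classical fact (Engelking's exercise, essentially the arc-connectedness of Peano continua) that a locally connected metric continuum $X$ is arc-connected. The plan is to split the task into two independent steps: first show $X$ is \emph{path}-connected, then upgrade an arbitrary path between two distinct points to an arc.

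For path-connectedness I would use ``Property~S'': since $X$ is compact and locally connected, for every $\varepsilon>0$ it is covered by finitely many connected open sets of diameter $<\varepsilon$ (around each point pick a connected open neighbourhood of small diameter, which exists because components of open sets are open in a locally connected space, then extract a finite subcover). Fixing $p,q\in X$, for each $n$ the relation ``joinable by a finite chain of connected open sets of diameter $<2^{-n}$ with consecutive members meeting'' is an equivalence relation on $X$ with open classes; since $X$ is connected there is a single class, so $p$ and $q$ are joined by such a chain $U^{n}_{1},\dots,U^{n}_{k_{n}}$. Intersecting with the members of the previous chain we may take the chains to be successively refining, and assigning to the $n$-th chain a partition of $[0,1]$ into $k_{n}$ closed subintervals (chosen nested with the earlier partitions), with the $j$-th subinterval mapped into $\overline{U^{n}_{j}}$, we obtain continuous maps $f_{n}\colon[0,1]\to X$ with $f_{n}(0)=p$, $f_{n}(1)=q$ and $\sup_{t}d(f_{n}(t),f_{n+1}(t))\le C\cdot 2^{-n}$. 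Hence $(f_{n})$ is uniformly Cauchy and its limit is a path from $p$ to $q$.

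For the second step, given a path $f$ from $p$ to $q$ with $p\neq q$, first trim it: replace $f$ by its restriction to $[t_{0},t_{1}]$ with $t_{0}=\sup f^{-1}(p)$ and $t_{1}=\inf\{t\ge t_{0}\colon f(t)=q\}$, reparametrised to $[0,1]$, so that $f^{-1}(p)=\{0\}$ and $f^{-1}(q)=\{1\}$. The image $A=f([0,1])$ is a metric continuum; by Zorn's Lemma (a nested intersection of subcontinua containing $p,q$ is again a subcontinuum containing $p,q$, by compactness) there is a minimal subcontinuum $M\subseteq A$ irreducible between $p$ and $q$, and one verifies via cut-point arguments that such an $M$ is homeomorphic to $[0,1]$. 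Equivalently, and perhaps more cleanly in the metric setting, one invokes the classical ``path implies arc'' theorem (a pathwise connected Hausdorff space is arcwise connected; cf.\ Willard, and \cite{engelking}), which is exactly the extraction of an arc from $f$ inside the Hausdorff space $A$.

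The main obstacle is this last extraction step: excising all the ``backtracking'' of a path in a merely Hausdorff target while keeping a homeomorphic copy of $[0,1]$. The difficulty is that the fibres $f^{-1}(x)$ need not be intervals, so the naive quotient of $[0,1]$ fails; one must instead pass to a maximal upper-semicontinuous decomposition of $[0,1]$ into intervals compatible with $f$, or argue through minimal subcontinua as above. The path-construction step is routine once Property~S is available, the only real care being the summable control of $\sup_{t}d(f_{n}(t),f_{n+1}(t))$, which is what forces the simultaneous nesting of the chains and of the associated partitions of $[0,1]$. (A conceptually appealing alternative to the whole argument: by the Bing--Moore convex metrization theorem a Peano continuum carries a compatible convex metric, and iterating midpoints then yields an isometric embedding of an interval, hence an arc, directly.)
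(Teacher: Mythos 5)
The paper never proves this lemma: it is imported wholesale from Engelking (Problem 6.3.11) and used as a black box, so there is no in-paper argument to compare yours against. Judged on its own terms, your sketch follows the standard textbook route (Property~S gives path-connectedness, then a path is upgraded to an arc), and you are commendably explicit about where the difficulty sits, but two steps do not go through as written.

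First, the intermediate maps $f_n\colon[0,1]\to X$ cannot be taken continuous: mapping a nondegenerate closed subinterval continuously into $\overline{U^{n}_{j}}$ with prescribed distinct values at its endpoints presupposes exactly the path-connectedness of the links that you are trying to establish, while a constant-on-subintervals choice is discontinuous at the partition points. The standard repair is to define the approximations only on the nested dense sets of partition points (or to allow discontinuous $f_n$); but then ``uniformly Cauchy, hence the limit is a path'' no longer suffices, since a uniform limit of discontinuous maps need not be continuous --- continuity of the limit must be read off directly from the chain refinement (two nearby parameters land in the closure of a union of at most two adjacent level-$n$ links, of diameter $O(2^{-n})$). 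Second, and more seriously, the claim that a Zorn-minimal subcontinuum $M\subseteq A$ containing $p$ and $q$ is an arc is false: an irreducible metric continuum need not be an arc. The $\sin(1/x)$-continuum is irreducible between the far end of its curve and any point of its limit segment, it embeds in a disk (a Peano continuum, and a possible image $A=f([0,1])$ of a path), and Zorn's lemma may hand you exactly that continuum; the cut-point characterization of the arc needs ``exactly two non-cut points,'' which irreducibility alone does not provide. Your fallbacks --- citing the path-implies-arc theorem for Hausdorff spaces, or Bing--Moore convex metrization --- are correct statements, but they are results of at least the same depth as the lemma itself, so invoking them is a citation rather than a proof (which, to be fair, is all the paper does too).
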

\noindent Now, suppose that $G$ is a countable graph and $H$ is a connected standard subspace of $(\widetilde{G},{\textit{\sc{Itop}}})$.
It follows from Theorem \ref{fctopcompact} that $H$ is compact.
Then by Theorem \ref{metrizable}, $H$ is metrizable. 
Thus $H$ is connected metric continuum and Lemma \ref{arc=connected} implies that $H$ is arc-connected.
If we summarize the above discussion, then we have the following theorem. 

\begin{thm}\label{arc=connected2}
If $G$ is countable, then every connected standard subspace of $(\widetilde{G},{\textit{\sc{Itop}}})$ is arc-connected.
\end{thm}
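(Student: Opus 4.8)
The plan is to assemble Theorem~\ref{arc=connected2} entirely from ingredients already established earlier in the paper, so the ``proof'' is really a careful bookkeeping of which prior results apply and in what order. Let $G$ be a countable graph and let $H$ be a connected standard subspace of $(\widetilde G,\textit{\sc{Itop}})$, i.e.\ $H=\overline{H_0}$ for some subgraph $H_0\subseteq G$, closure taken in $(\widetilde G,\textit{\sc{Itop}})$. I would invoke Lemma~\ref{arc=connected}: every locally connected metric continuum is arc-connected. So the task reduces to verifying the three hypotheses for $H$: it is a \emph{continuum} (compact, connected, Hausdorff), it is \emph{metrizable}, and it is \emph{locally connected}.

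First I would check compactness. By Theorem~\ref{IFCTop=Itop} the spaces $(\widetilde G,\textit{\sc{Itop}})$ and $(\widetilde G,\textit{\sc{IFCtop}})$ are homeomorphic, so it suffices to work in $\textit{\sc{IFCtop}}$; and Theorem~\ref{fctopcompact} together with the passage to the quotient shows $(\widetilde G,\textit{\sc{IFCtop}})$ is compact. Since $H$ is the closure of a subgraph, $H$ is closed in $\widetilde G$, and a closed subspace of a compact space is compact. Connectedness is part of the hypothesis on $H$, and the Hausdorff property is inherited from $(\widetilde G,\textit{\sc{IFCtop}})$, which is Hausdorff by construction (we identified points with the same neighbourhoods and then the resulting compact space is in fact regular, hence Hausdorff). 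Thus $H$ is a continuum. For metrizability I would cite Theorem~\ref{metrizable}: for countable $G$, $(\widetilde G,\textit{\sc{IFCtop}})$ is metrizable, and metrizability is hereditary, so $H$ is metrizable. This is exactly the chain of reasoning sketched in the paragraph preceding the theorem statement.

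The one hypothesis that the preceding discussion glosses over, and which I expect to be the genuine sticking point, is \emph{local connectedness} of $H$. Lemma~\ref{arc=connected} needs it, but a closed subspace of a locally connected space need not be locally connected, so it must be argued directly for standard subspaces. I would argue it using the basic closed (equivalently, basic open) sets of $\textit{\sc{FCtop}}$: around a vertex, an inner edge point, or an end, the basic neighbourhoods are, respectively, (portions of) finitely many Hawaiian-earring-type pieces, an open interval, and a component $F$ of $G\setminus\mathring C$ together with the ends living on $F$ — and each of these is connected, indeed path-connected, within $\widetilde G$. Intersecting such a neighbourhood with a standard subspace $H=\overline{H_0}$ yields again a set of this controlled shape relative to $H_0$, and one checks it is connected because $H_0$, being a subgraph, meets each component piece in a connected subgraph whose closure stays connected. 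Hence $\widetilde G$, and therefore every standard subspace, has a neighbourhood basis of connected sets, i.e.\ is locally connected.

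Putting the pieces together: $H$ is a locally connected metric continuum, so Lemma~\ref{arc=connected} gives that $H$ is arc-connected, which is the assertion of Theorem~\ref{arc=connected2}. I would present the argument in the order (1) reduce to $\textit{\sc{IFCtop}}$ via Theorem~\ref{IFCTop=Itop}; (2) compactness of $H$ from Theorem~\ref{fctopcompact}; (3) metrizability from Theorem~\ref{metrizable}; (4) local connectedness from the explicit description of basic neighbourhoods, this being the step that needs the most care; (5) apply Lemma~\ref{arc=connected}. The main obstacle is step~(4): it is the only place where the ``standard subspace'' hypothesis is really used, and one must be a little careful that closures of subgraphs inside basic neighbourhoods remain connected rather than merely quoting heredity of a topological property.
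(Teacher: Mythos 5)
Your proposal is essentially the paper's own argument: the proof given there is precisely the paragraph preceding the theorem, which notes that $H$ is compact (Theorem~\ref{fctopcompact}), metrizable (Theorem~\ref{metrizable}) and connected, hence a metric continuum, and then applies Lemma~\ref{arc=connected}. The only substantive difference is your step (4): the paper invokes Lemma~\ref{arc=connected} without ever verifying local connectedness, so you have correctly isolated the hypothesis the published argument leaves unchecked — though your own sketch of it still needs care, since $H_0$ may meet a component of $G\setminus\mathring{C}$ in a disconnected subgraph, so $H\cap\mathcal{C}_C(\omega)$ need not itself be connected and one must instead pass to the component of the given point and show that this component is a neighbourhood in $H$.
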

\noindent The following well-known lemma is important.
It can be found in \cite{diestel} with a different formulation.
\begin{lem}\label{arc-connected}
A standard subspace~$H$ of~$\widetilde{G}$ is arc-connected if and only if~$H$ contains an edge from every finite cut of~$G$ of which it meets both sides.
\end{lem}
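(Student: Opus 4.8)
The statement is an "iff", so the plan is to prove the two directions separately, and in each direction to reduce to the finite auxiliary graphs $\mathcal{G}.E$ via the inverse limit description established in Theorem~\ref{IFCTop=Itop}. For the forward direction, suppose $H$ is arc-connected and let $C=(A,B)$ be a finite cut of $G$ with $H\cap \mathring A\neq\emptyset\neq H\cap\mathring B$ (abusing notation for "$H$ meets both sides"). Pick $x\in H$ on the $A$-side and $y\in H$ on the $B$-side; since $H$ is arc-connected there is an arc $\alpha\subseteq H$ from $x$ to $y$. The arc is a continuous image of $[0,1]$, hence connected, so it cannot be contained in $(\widetilde G\setminus\mathring C)$ with $x$ and $y$ in different components: some point of $\alpha$ must lie in $\mathring C$, i.e.\ in the interior of an edge $e$ of $C$. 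Since $H$ is a standard subspace (the closure of a subgraph), a point in $\mathring e$ lying in $H$ forces the whole edge $e\in E(H)$. Thus $H$ contains an edge of $C$.

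\textbf{The converse.} Assume $H$ contains an edge from every finite cut it meets on both sides; we must show $H$ is arc-connected. By Theorem~\ref{arc=connected2}-type reasoning (or directly), it suffices to show $H$ is connected, since a connected standard subspace of $(\widetilde G,{\textit{\sc{Itop}}})$ is arc-connected. Suppose $H$ is disconnected, say $H=H_1\sqcup H_2$ with $H_1,H_2$ nonempty, closed, and open in $H$. The key step is to produce from this separation a \emph{finite} cut of $G$ separating the two pieces. Using the inverse-limit description, $\widetilde G=\underleftarrow{\lim}\,\Vert\mathcal{G}.E\Vert$, and since $H_1,H_2$ are disjoint closed (hence compact) subsets of the compact Hausdorff space $\widetilde G$, they have disjoint closed neighborhoods, and by the basic-open-set structure of the inverse limit there is some finite edge set $E_0$ such that already the images of $H_1$ and $H_2$ in $\Vert\mathcal{G}.E_0\Vert$ are separated. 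This $E_0$ is essentially a finite cut of $G$ (or contains one) that $H$ meets on both sides but from which $H$ contains no edge — contradicting the hypothesis. Hence $H$ is connected, and therefore arc-connected.

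\textbf{Main obstacle.} The delicate point is the converse: extracting a genuine finite cut $(A,B)$ of $G$ out of the topological separation $H=H_1\sqcup H_2$, and checking that $H$ meeting both sides of this cut together with "$H$ contains no edge of it" is exactly what the separation gives. One has to be careful that $H_1,H_2$ being merely clopen in $H$ (not in $\widetilde G$) still yields a finite cut: this is where compactness of $H$ (Theorem~\ref{fctopcompact}), normality, and the fact that basic closed sets in ${\textit{\sc{FCtop}}}$ are defined by finite cuts all get used. An alternative, cleaner route is to argue entirely in the finite quotients: each $\mathcal{G}.E$ is a finite multigraph, $H$ projects to a subgraph $H_E$ of $\mathcal{G}.E$, connectivity of $H$ is equivalent to all the $H_E$ being connected, and in a finite graph $H_E$ is connected iff it meets every edge-cut it meets on both sides — then one pulls this back through the inverse system. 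I expect the write-up to favour whichever of these two is shorter, but either way the heart of the matter is the translation between the topological separation and a finite combinatorial cut.
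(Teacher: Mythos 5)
Your forward direction is essentially the paper's argument and is sound, except that you silently use the fact that $x$ and $y$ lie in \emph{different} components of $\widetilde G\setminus\mathring C$, i.e.\ that $\overline{G[A]}$ and $\overline{G[B]}$ are disjoint in $\widetilde G$. In the quotient space this is not automatic: one must rule out an end (or an identified pair of points) lying in the closure of both sides, which the paper does by exhibiting, for any end in $\overline{G[B]}$, a basic \textsc{Etop}-neighbourhood built from inner points of the finitely many edges of $C$ that avoids $G[A]$. That is a routine repair, but it is a step, not a triviality.

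The converse is where the genuine gap is, and it sits exactly where you flagged it. Two things are asserted but not proved. First, the descent of the separation $H=H_1\sqcup H_2$ to a finite stage: ``disjoint closed neighbourhoods'' plus ``basic-open-set structure'' is not an argument that the images of $H_1$ and $H_2$ in some $\Vert\mathcal{G}.E_0\Vert$ are disjoint; the correct route is the inverse-limit fact that a closed set $A$ in a compact Hausdorff inverse limit satisfies $A=\bigcap f_{E}^{-1}(f_{E}(A))$, after which compactness and directedness of the index set give a single $E_0$ with $f_{E_0}(H_1)\cap f_{E_0}(H_2)=\emptyset$. Second, and more seriously, ``$E_0$ is essentially a finite cut'' is not yet a proof: $E_0$ is merely a finite edge set. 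To extract the cut one must note that $f_{E_0}(H)$ is a subgraph of the finite graph $\mathcal{G}.E_0$ no component of which meets both $f_{E_0}(H_1)$ and $f_{E_0}(H_2)$, let $W\subseteq V(G)$ be the union of the $\Phi$-preimages of the vertices lying in components that meet $f_{E_0}(H_1)$, and take the cut $(W,V(G)\setminus W)$, which is finite because all its edges lie in $E_0$; one must then still check that $H$ contains no edge of this cut (such an edge would join the two groups of components inside $f_{E_0}(H)$) and that $H$ meets both sides (each $H_i$, being nonempty and open in the closure of a subgraph, contains a vertex of that subgraph). None of this is in your write-up, and it is precisely the content of the direction. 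For comparison, the paper proves the converse without the inverse limit at all: it takes a maximal system of edge-disjoint paths between the two pieces of the separation and applies the star--comb lemma, concluding that either some finite cut is met on both sides with no edge of $H$ in it, or an end or infinite-degree vertex would have to belong to both pieces. Your route can be completed, and it is arguably more in the spirit of the paper's inverse-limit machinery, but as written the heart of the converse is missing.
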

\begin{proof}
First let~$H$ be arc-connected. 
Then assume to the contrary that~$G$ has a finite cut~$C=(A,B)$ which both~$A$ and~$B$ meet~$H$ such that~$H$ has no edge in~$C$.
Thus one can see that~$H\subseteq \widetilde{G}\setminus \mathring C$.
On the other hand, we know that~$\widetilde{G}\setminus \mathring C$ is equal to~$\overline{G[A]}\cup\overline{G[B]}$.
Now we claim that this union is disjoint.
Otherwise there is an element~$x\in \overline{G[A]}\cap\overline{G[B]}$.
Therefore the class of $x$ is the class of an end $\omega$ in $\widetilde{G}$, as $G[A]\cap G[B]=\emptyset$.
Pick up an arbitrary inner point from each edge in the finite cut~$C$.
Let~$\mathcal{O}(\omega)$ be a basic open neighbourhood of~$\omega$ in \textit{\sc{Etop}} containing~$G[B]$ with respect to these inner points.
Hence~$\mathcal{O}(\omega)$ has no intersection with~$G[A]$ and so the claim is proved.
Since~$\widetilde{G}$ is a disjoint union of closed sets,~$\widetilde{G}$ is not connected and so is not arc-connected and it yields a contradiction.\\
Conversely,  suppose that~$H=\overline{ (X,D)}$, where $X\subseteq V(G)$ and $D\subseteq E(G)$.
Assume to the contrary that $H$ is not arc-connected and equivalently by Theorem \ref{arc=connected2}, we can assume that  $H$ is not connected.
Let $H$ be the disjoint union of open sets $O_1$ and $O_2$ and set $X_i:=O_i\cap X$.
Let $C_1$ be a component of $X_1$ and let $P$ be a maximal edge-disjoint $C_1$-$X_2$ paths.
If there is a component of $X_2$ such that there are only finitely paths of $P$ between this component and $C_1$, then we have a finite cut between this component and $C_1$.
By the assumption, $H$ has to meet this finite cut and we get a contradiction with $H=O_1\cup O_2$.
Otherwise there are infinitely many paths between $C_1$ and each component of $X_2$.
Choose from each path a vertex.
So we have infinity many vertices.
It follows from Lemma \ref{star-comb} that $C_1$ contains either an end $\omega$ or a vertex $v$ with an infinite degree.
If $C_1$ has a vertex $v$ of the infinite degree, then we are not able to separate $v$ from each vertex of any component of $X_2$ and a contradiction is obtained.
So we can assume that there is an end which lives in $C_1$.
With a similar argument, we can show that any component of $X_2$ has an end.
Therefore there is an end belonging to $O_1$ and $O_2$ and  it yields a contradiction.
\end{proof}

\noindent A strategy for finding a topological spanning tree in~$\widetilde{G}$ is investigating spanning trees in each~$\mathcal{G}_M$ for every finite set~$M$ of ~$\mathcal{B}_{\rm{fin}}(G)$ and extending this spanning tree to a topological spanning tree in~$\widetilde{G}$. 
\begin{thm}
Let~$G$ be a countable graph. Then~$(\widetilde{G},{\textit{\sc{Itop}}})$ contains a topological spanning tree.
\end{thm}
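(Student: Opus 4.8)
The plan is to build the topological spanning tree of $\widetilde G$ as an inverse limit of spanning trees of the finite auxiliary graphs $\mathcal G_M$, exploiting the homeomorphism $\widetilde G\cong\underleftarrow{\lim}\,\Vert\mathcal G_M\Vert$ from Theorem \ref{IFCTop=Itop}. First I would fix an enumeration of a cofinal chain $M_1\subseteq M_2\subseteq\cdots$ of finite subsets of $\mathcal B_{\mathrm{fin}}(G)$ whose union is all of $\mathcal B_{\mathrm{fin}}(G)$; this is possible because $G$ is countable, hence $\mathcal B_{\mathrm{fin}}(G)$ is countable, and a countable directed poset has a cofinal chain. Since the inverse limit over this chain is homeomorphic to the inverse limit over all of $\Gamma$, it suffices to work with the chain.

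Next I would construct, inductively along the chain, spanning trees $T_n$ of $\mathcal G_{M_n}$ that are compatible with the bonding maps, i.e. $\psi_{M_nM_{n-1}}(T_n)=T_{n-1}$. The base case takes any spanning tree $T_1$ of the finite graph $\mathcal G_{M_1}$. For the inductive step, $\mathcal G_{M_n}$ refines $\mathcal G_{M_{n-1}}$: each vertex $W$ of $\mathcal G_{M_{n-1}}$ blows up into a finite set of vertices of $\mathcal G_{M_n}$ spanning the "fibre" $\psi_{M_nM_{n-1}}^{-1}(W)$, and the edges of $T_{n-1}$ lift to edges of $\mathcal G_{M_n}$. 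I would first pick, inside each fibre, a spanning tree of the induced subgraph — here I must check that the fibre is connected in $\mathcal G_{M_n}$; if not, the natural fix is to replace "spanning tree" by "spanning forest with one component per component of the fibre", and observe that distinct components of a fibre over $W$ already get separated by some cut in $M_n$, so in the limit they correspond to genuinely different points of $\widetilde G$ and no problem arises. Then I would lift the chosen edges of $T_{n-1}$ (one per edge of $T_{n-1}$, chosen among the parallel lifts) and add them; the key verification is that the resulting subgraph is acyclic and connected in $\mathcal G_{M_n}$, which follows because contracting each fibre sends it onto $T_{n-1}$ and the fibre pieces are themselves trees. By König's lemma / compactness of the inverse limit, or directly, $T:=\underleftarrow{\lim}\,T_n$ is a well-defined closed subset of $\underleftarrow{\lim}\,\Vert\mathcal G_{M_n}\Vert\cong\widetilde G$, and since each $T_n$ is the closure of a subgraph of $G$ (pull the edges of $T_n$ back through $\phi_{M_n}$, which is the identity on edges), $T$ is a standard subspace.

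It then remains to verify the three defining properties. That $T$ contains every vertex of $\widetilde G$: each class of a vertex maps into $T_n$ for every $n$ since $T_n$ is spanning, so its image in the limit lies in $T$. That $T$ is arc-connected: by Theorem \ref{arc=connected2} it is enough to check $T$ is connected, and for that I would use Lemma \ref{arc-connected} — given a finite cut $C=(A,B)$ of $G$ meeting both sides of $T$, choose $n$ with $C\in M_n$; then $C$ induces a cut of $\mathcal G_{M_n}$, the spanning tree $T_n$ must cross it via some edge $e$, and since $\phi_{M_n}$ is the identity on edges, $e\in E(T)$, so $T$ contains an edge of $C$. That $T$ contains no circle: a circle in $\widetilde G$ would, for suitable $n$, project under $\phi_{M_n}^{-1}$-then-$\psi$ to a cycle in the finite tree $T_n$ — more carefully, a non-trivial circle must traverse at least one edge $e$, pick $n$ with both endpoints of $e$ separated appropriately, and the circle projects to a closed walk in $T_n$ using $e$, contradicting acyclicity of $T_n$; the routine point to nail down is that a topological circle always uses a full edge, which holds because $\widetilde G$ has no nondegenerate connected subsets inside a single open edge avoiding its endpoints other than arcs.

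The main obstacle is the inductive compatibility of the spanning trees: ensuring that the fibres of the bonding maps $\psi_{M_nM_{n-1}}$ behave well enough that a spanning tree of $\mathcal G_{M_{n-1}}$ can always be lifted to one of $\mathcal G_{M_n}$ without creating cycles, which forces the careful bookkeeping about connectedness of fibres and the forest-versus-tree subtlety noted above; everything else is either a compactness argument or a direct application of Lemmas \ref{arc-connected} and \ref{arc=connected2}.
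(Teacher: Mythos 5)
Your overall strategy is the same as the paper's: enumerate the finite cuts of the countable graph, build compatible spanning trees of the auxiliary graphs $\mathcal{G}_{M_n}$ along a cofinal chain, pass to the limit, and verify arc-connectedness via Lemma~\ref{arc-connected} and acyclicity via the finite trees. The parts you treat as routine (cofinal chain, standardness of the limit, the application of Lemma~\ref{arc-connected}, and the projection argument against circles) are essentially fine.

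The genuine gap sits exactly where you locate ``the main obstacle'', and your proposed patch does not close it. The inductive step can fail outright: a spanning tree of $\mathcal{G}_{M}$ need not lift to a compatible spanning tree of $\mathcal{G}_{M'}$, because the fibres of $\psi_{M'M}$ can be disconnected and their components may be joinable \emph{only} through edges of cuts already in $M$. Concretely, let $G$ be the double ray $\dots c_{-1}c_0c_1c_2c_3\dots$, let $C_1$ be the cut with sides $\{c_0,c_2\}$ and $V(G)\setminus\{c_0,c_2\}$, so that $C_1=\{c_{-1}c_0,\,c_0c_1,\,c_1c_2,\,c_2c_3\}$, and let $C_2$ be the cut whose only edge is $c_1c_2$. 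Every spanning tree of $\mathcal{G}_{\{C_1\}}$ uses exactly one edge of $C_1$, whereas every spanning tree of $\mathcal{G}_{\{C_1,C_2\}}$ is forced to contain $c_2c_3$ (the only edge leaving the class $\{c_3,c_4,\dots\}$) and $c_1c_2$ (the only remaining edge leaving the class $\{c_2\}$), hence three edges of $C_1$; so no compatible lift exists, for \emph{any} choice of $T_1$. Your fallback --- a spanning forest with one component per component of each fibre --- yields a forest rather than a tree, and the deficiency is never repaired: in the example the class $\{c_2\}$ can only ever be attached via $c_1c_2$ or $c_2c_3$, both in $C_1$, which your compatibility condition forbids once $T_1$ has committed to a different edge of $C_1$; the limit then fails Lemma~\ref{arc-connected} for the finite cut separating $c_2$ from the rest, so it is not arc-connected. (Your remark that the separated fibre components ``correspond to genuinely different points of $\widetilde G$ and no problem arises'' is backwards: precisely because they are different points, $T$ must connect them.) What is missing is an argument that the trees $T_n$ can be chosen with foresight --- e.g.\ a normalization of which cuts are admitted at each stage, or a mechanism for revising earlier choices --- so that each new fibre can be connected using edges that are still available. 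The paper's own proof asserts this extension step in a single sentence and is vulnerable to the same example, so you have correctly identified the crux, but as written your induction does not go through.
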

\begin{proof}
We are going to construct trees in our inverse system inductively and we show that the limit of these trees is our required topological spanning tree.
Let~$M$ be a finite subset of ~$\mathcal{B}_{\rm{fin}}(G)$ and~$C\notin M$ be a finite cut.
We set~$M'=M\cup \{C\}$. 
Then we show that there exists a spanning tree~$T_M$ of~$\mathcal{G}_M$  such  that~$E(T_{M'})\cap E(\mathcal{G}_{M}) = E(T_M)$.
Suppose that~$V_1,\ldots,V_t$ are vertices of~$\mathcal{G}_M$.
Thus the set of~$\{\Phi(V_i)\mid i=1,\ldots,t\}$ is a partition of the vertex set of~$G$ and so we have~$\bigcup_{i=1}^t\Phi(V_i)=V(G)$.
Adding the cut $C$ refines the partition $\{\Phi(V_1),\ldots,\Phi(V_t)\}$.
We notice that $E(\mathcal{G}_{M})=E(\mathcal{G}_{M'})$.
Thus we are able to find the edges of $T_M$ in $\mathcal{G}_{M'}$.
We now add some edges of $\mathcal{G}_{M'}$ to $T_M$ to assure that we have a tree.
Let us denote the new tree with $T_M'$. 
We set $N_j=\{C_i\mid i\leq j\}$
Define$$T:=\overline{\bigcup_{i\in\mathbb N} E(T_{N_i})}.$$
We claim that~$T$ is a topological spanning tree of~$\widetilde{G}$.
In order to show that~$T$ is arc-connected, we invoke Lemma \ref{arc-connected}.
We have to show that every finite cut of~$G$ contains an edge from~$T$. 
By definition of the graphs~$\mathcal{G}_M$ and~$T_M$, we picked an edge up from each finite cut of~$M$ 
Next we show that~$T$ contains no circle.
Assume to contrary that~$T$ contains a circle~$C$.
Let~$u$,~$v$  be two vertices  of~$C$.
Then there exists a finite cut~$F$ separating~$u$ and~$v$.
So we can choose~$M$ large enough that~$M$ contains~$F$. 
Suppose that~$H$ is a fundamental cut with respect to $T_M$ of~$\mathcal G_M$ separating~$u$ from~$v$.
It is important to notice that~$H$ gives us a finite cut of~$G$.
Since~$C$ meets~$H$, it follows from Lemma \ref{arc-connected} that~$C$ should contain an edge from~$H$.
Let~$e\in C\cap H$.
Then since~$C\setminus e$ is still arc-connected, Lemma \ref{arc-connected} implies that~$C\setminus e$ meets~$H$.
Thus we can conclude that~$T$ has two edges in the finite cut~$H$.
Therefore we have a contradiction, as by definition one can see that~$E(T_M)=E(T)\cap E(\mathcal{G}_M)$ and we picked only one edge up from each finite cut of~$\mathcal{G}_M$.  
\end{proof}
\noindent We finish our paper with the following finial remark.
\begin{remark}
We have defined {\sc{Itop}} as the quotient topology of {\sc{Etop}} and in the above we constructed a topological spanning in $\widetilde{G}$ as a limit of spanning trees.
In our proof, we benefit so much from the properties of {\sc{Etop}} and we cannot replace it with the others topologies.
For instance, if we apply the quotient topology on {\sc{Top}}, not necessarily there is a topological spanning tree on $\widetilde{G}$.
Diestel and K$\ddot{u}$hn have discovered a counterexample that shows that the quotient topology of {\sc{Top}} does not contain any topological spanning tree, see~{\rm\cite[Corollary 3.5]{diestelkuhn2}}.
\end{remark}

\vspace{.05cm}
\noindent{\bf Acknowledgements.}  The author is so grateful to Nathan Bowler, Pascal Gollin, Matthias Hamann and Tim R\"{u}hmann for their comments and the invaluable discussions during the preparation of this paper.
{}  
  

\begin{thebibliography}{}{\small

\bibitem{can} J.W. Cannon, G.R. Conner, The combinatorial structure of the Hawaiian earring group, Topology Appl. 106 (3), (2000), 225--271.

\bibitem{Georgakopoulos1} A. Georgakopoulos. Infinite Hamilton cycles in squares of locally finite graphs. Advances in Mathematics, 220, (2009), 670--705, .

\bibitem{Georgakopoulos} A. Georgakopoulos. Graph topologies induced by edge lengths. In Infinite Graphs: Introductions, Connections, Surveys. Special issue of Discrete Math., volume 311, (2011),  1523--1542, .

\bibitem{diestel1} R. Diestel, End spaces and spanning trees,  J. Comb. Theory, Ser. B, 96, (2006), 846--854.

\bibitem{diestel} R. Diestel, Graph theory, 4th edition, Springer-Verlag, (2010).

\bibitem{sur} R. Diestel, Locally finite graphs with ends: a topological approach.Hamburger Beitr. Math., 340, (2009).

\bibitem{diestelper} R. Diestel, Personal communication, 2015.

\bibitem{diestelkuhn}  R. Diestel, D. K$\rm{\ddot{u}}$hn, Graph-theoretical versus topological ends of graphs, J. Combin. Theory (Series B) 87 (2003), 197--206;

\bibitem{diestelkuhn2}  R. Diestel, D. K$\rm{\ddot{u}}$hn, Topological paths, cycles and spanning trees in infinite graphs, European J. Combin. 25 (2004), no. 6, 835--862. 

\bibitem{engelking} R Engelking, General Topology. Translated from the Polish by the author. 2nd ed. Sigma Series in Pure Mathematics, 6. Berlin: Heldermann Verlag, 1989.

\bibitem{Freu} H. Freudenthal,~$\rm{\ddot{U}}$ber die Enden topologischer R$\rm{\ddot{a}}$ume und Gruppen. Math. Zeitschr., 33:, (1931), 692--713.

\bibitem{Hahn} G. Hahn, F.Laviolette, J.$\rm{\hat{S}}$ir$\rm{\acute{a}}$n, Edge-Ends in countable graphs, J.Comb.Theory, Ser B, 70, (1997), 225--244.

\bibitem{halin} R. Halin,~$\rm{\ddot{U}}$ber unendliche Wege in Graphen, Math. Annalen 157, (1964), 125--137.

\bibitem{mir} B. Miraftab, M. J. Moghadamzadeh, Algebraic flow theory of infinite graphs, submitted. 

\bibitem{munkres} J.R. Munkres, Topology: A First Course. Second Edition. Prentice-Hall, Englewood Cliffs, NJ, (2000). 

\bibitem{munkres2} J.R. Munkres, Elements of algebraic topology, Addison–Wesley Publ., Menlo Park, CA, (1984).

\bibitem{Polat} N. Polat, Topological aspects of infinite graphs in: G. Hahn (Ed.), et al., Cycles and Rays, NATO ASI Ser. C, Kluwer, Dordrecht (1990), 197--220.

\bibitem{profinite group} L. Ribes, P. Zalesskii, Profinite groups, Springer (2010).

\bibitem{Schulz} M. Schulz, Der Zyklenraum nicht lokal-endlicher Graphen, Diplomarbeit, Univ.Hamburg, (2005).

\bibitem{stein} M. Stein, Extremal infinite graph theory, Discrete Math. 311 (2011), no. 15, 1472--1496.
}\end{thebibliography}
   \end{document}